\pgfplotsset{compat=newest}
\newtheorem{theorem}{Theorem}[section]
\newtheorem{lemma}[theorem]{Lemma}
\newtheorem{corollary}[theorem]{Corollary}
\def\letters{a,b,c,d,e,f,g,h,i,j,k,l,m,n,o,p,q,r,s,t,u,v,w,x,y,z}
\def\Letters{A,B,C,D,E,F,G,H,I,J,K,L,M,N,O,P,Q,R,S,T,U,V,W,X,Y,Z}
\Letters \do{%
  \expandafter\edef\csname\@l bb\endcsname{%
  \noexpand\ensuremath{\noexpand\mathbb{\@l}}}%
  \expandafter\edef\csname\@l bf\endcsname{{\noexpand\bf \@l}}%
  \expandafter\edef\csname\@l cal\endcsname{%
  \noexpand\ensuremath{\noexpand\mathcal{\@l}}}%
  \expandafter\edef\csname\@l eu\endcsname{%
  \noexpand\ensuremath{\noexpand\EuScript{\@l}}}%
  \expandafter\edef\csname\@l frak\endcsname{%
  \noexpand\ensuremath{\noexpand\mathfrak{\@l}}}%
  \expandafter\edef\csname\@l rm\endcsname{{\noexpand\rm \@l}}%
  \expandafter\edef\csname\@l scr\endcsname{%
  \noexpand\ensuremath{\noexpand\mathscr{\@l}}}%
}
\letters \do{%
  \expandafter\edef\csname\@l bf\endcsname{{\noexpand\bf \@l}}%
  \expandafter\edef\csname\@l frak\endcsname{%
  \noexpand\ensuremath{\noexpand\mathfrak{\@l}}}%
  \expandafter\edef\csname\@l scr\endcsname{%
  \noexpand\ensuremath{\noexpand\mathscr{\@l}}}%
}
\definecolor{shadecolor}{rgb}{0.6, 0.6, 0.6} 
\definecolor{darkgreen}{rgb}{0, 0.6, 0}
\newcommand{\isdef}{\mathrel{\mathrel{\mathop:}=}}
\newcommand{\defis}{\mathrel{=\mathrel{\mathop:}}}
\newcommand{\R}{\mathbb{R}}
\newcommand{\bs}{\boldsymbol}
\DeclareMathOperator{\diam}{diam}
\DeclareMathOperator\dist{dist}
\DeclareMathOperator\supp{supp}
\newcommand{\diag}{\operatorname{diag}}
\definecolor{blue}{rgb}{0,0,0}
\begin{document}
\title{Multiscale scattered data analysis in samplet coordinates}
\author{Sara Avesani}
\address{
Sara Avesani,
Istituto Eulero,
USI Lugano,
Via la Santa 1, 6962 Lugano, Svizzera.}
\email{sara.avesani@usi.ch}
\author{R\"udiger Kempf}
\address{
R\"udiger Kempf,
Applied and Numerical Analysis, 
Department of Mathematics, 
University of Bayreuth, 
95440 Bayreuth, Germany}
\email{ruediger.kempf@uni-bayreuth.de}
\author{Michael Multerer}
\address{
Michael Multerer,
Istituto Eulero,
USI Lugano,
Via la Santa 1, 6962 Lugano, Svizzera.}
\email{michael.multerer@usi.ch}
\author{Holger Wendland}
\address{
Holger Wendland,
Applied and Numerical Analysis, 
Department of Mathematics, 
University of Bayreuth, 
95440 Bayreuth, Germany}
\email{holger.wendland@uni-bayreuth.de}

\begin{abstract}
We study multiscale scattered data interpolation schemes for globally supported
radial basis functions with focus on the Mat\'ern class. The multiscale
approximation is constructed through a sequence of residual corrections, where
radial basis functions with different lengthscale parameters are combined to
capture varying levels of detail. We prove that the condition numbers
of the the diagonal blocks of the corresponding multiscale system
remain bounded independently of the particular level, allowing us to use an 
iterative solver with a bounded number of iterations for the numerical solution. 
\textcolor{blue}{Employing an appropriate diagonal scaling, the multiscale 
system becomes well conditioned. We exploit this fact to derive a general error 
estimate bounding the consistency error issuing from a numerical approximation 
of the multiscale system.} 
To apply the multiscale approach to large data sets, we suggest to represent 
each level of the multiscale system in samplet coordinates. Samplets are 
localized, discrete signed measures exhibiting vanishing moments and allow for
the sparse approximation of generalized Vandermonde matrices issuing from a
vast class of radial basis functions. Given a quasi-uniform set of $N$ data
sites, and local approximation spaces with \textcolor{blue}{exponentially 
decreasing dimension}, the samplet compressed multiscale system can
be assembled with cost $\Ocal(N \log^2 N)$. The overall cost of the proposed
approach is $\Ocal(N \log^2 N)$. The theoretical findings are accompanied by
extensive numerical studies in two and three spatial dimensions.

\end{abstract}

\maketitle
\section{Introduction}
Interpolation using radial basis functions (RBFs) is a widely recognized
technique for fitting functions based on scattered data in $\mathbb{R}^d$, see
\cite{madych1983multivariate, micchelli1984interpolation,
narcowich1999multilevel} for some surveys. RBF interpolation allows for the
construction of approximation spaces in arbitrary dimensions and with varying
levels of smoothness. These spaces are well known to provide high-quality
approximations, see \cite{madych1990multivariate, wu1993local}. Even so, the
computation of an RBF interpolant usually entails a significant
computational cost and the corresponding linear system is typically
ill-conditioned. Even worse, as the convergence order increases, the associated
problems become successively ill-conditioned. Moreover, using globally supported
RBFs in interpolation leads to densely populated matrices, making the problem
computationally infeasible for larger datasets. To address the latter,
techniques such as interpolation using compactly supported RBFs have been
developed \cite{wendland1995piecewise,wu1995compactly}. Unfortunately, even
interpolation with these RBFs either becomes computationally too expensive,
if unscaled kernels are used, or does not even lead to convergence, if the support
is scaled proportional to the fill-distance. Motivated by balancing this trade-off
principle, the works \cite{floater1996multistep,hales2002error,
gia2010multiscale,wendland2010multiscale}, among many others, leverage the idea
of a multiscale method where approximations on different scales are computed in
a residual correction scheme. To date, the most general convergence theorems
for multiscale interpolation using positive definite, compactly supported RBFs
have been developed in \cite{wendland2010multiscale}. The advantage of the 
multiscale method is that the condition numbers of the resulting linear systems
at each level remain independent of the particular level.

\textcolor{blue}{An effective approach to address the computational challenges 
of large datasets is the greedy kernel method \cite{wenzel2023analysis}. The
greedy kernel method systematically selects a representative subset of the data 
to construct a sparse approximation. An alternative strategy involves deep kernel 
methods \cite{deep1,rebai2016deep,wilson2016deep,wenzel2024data}. These methods 
leverage deep neural networks to learn adaptive feature representations, thereby
constructing kernels that are tuned to the intrinsic structure of medium 
to high-dimensional data. Further, kernel multigrid methods 
\cite{wright2023,lawrence2024,rieger2025} have been developed for the efficient 
numerical solution of partial differential equations on complex domains such as 
manifolds or point cloud surfaces. These techniques use specialized kernels,
such as polyharmonic or homogeneous kernels, along with localized Lagrange bases.}

In this article, we study the RBF interpolation problem using globally supported
RBFs. We address the challenge of dealing with densely-populated interpolation
matrices by employing \emph{samplet compression}, a technique introduced in
\cite{harbrecht2022samplets}. Samplets are discrete signed measures constructed
such that all polynomials up to a certain degree vanish. This vanishing moment
property allows for the compression of generalized Vandermonde matrices issuing
from \emph{asymptotically smooth} RBFs, i.e., they behave like smooth functions 
apart from zero. Hence, in samplet coordinates, the resulting interpolation 
matrices become quasi-sparse, which means that they can be compressed such that
only a sparse matrix remains. Furthermore, samplets allow for a respective 
sparse matrix algebra, see \cite{HMSS24}, and provide a meaningful
interpretation of sparsity constraints for scattered data, see \cite{BHM24}. 
\textcolor{blue}{Unlike approaches that rely on the specific kernel, samplets 
can be applied to a broad class of positive definite kernels.}

However, as the samplet transform of a given generalized Vandermonde matrix
is an isometry
up to the compression error, it does not alleviate the ill-conditioning.
Therefore, to mitigate issues related to ill-conditioning, we adopt a multiscale
interpolation approach that exploits scaled versions of the same RBF, which, in
our case, remains globally supported. Especially, we extend the results from
\cite{wendland2010multiscale} for compactly supported RBFs to the class of 
Mat\'ern kernels.
\textcolor{blue}{As a consequence, the diagonally scaled multiscale system
becomes well conditioned. We exploit this fact to derive a general error 
estimate bounding the consistency error issuing from a numerical approximation
of the multiscale system. The theoretical results are corroborated by extensive
numerical studies in two and three spatial dimensions.} 

The remainder of this article is organized as follows. In Section~\ref{sec:CMM},
we revisit the classical multiscale interpolation algorithm, detailing the
relevant subspaces and the class of radial basis functions considered in this
work. In Section~\ref{sec:Cond} we carry over a known result for compactly
supported functions to
globally supported RBFs: The condition number of the generalized Vandermonde
matrix can be bounded uniformly in the number of sites, if an
appropriate scaling is applied.
\textcolor{blue}{Section~\ref{sec:AppErr} derives a general approximation error 
estimate for the case that the block matrix in the multiscale system is 
numerically approximated.}
Section~\ref{SampletsSection} provides a brief introduction of samplets, with a
focus on the key properties of the basis transformation and compression.
In Section~\ref{section:SampletAlgorithm}, we present
the multiscale interpolation algorithm in the samplet basis and 
\textcolor{blue}{provide error and cost estimates}. Finally, in 
Section~\ref{sec:Numerics}, we provide numerical tests in both two and three
spatial dimensions.
Concluding remarks are stated in Section~\ref{sec:Conclusion}.

\textcolor{blue}{
Throughout this article, to avoid the excessive use
of generic constants, we use the notation \(A\sim B\) to indicate that
there exists constants \(\underline{c},\overline{c}>0\) independent 
of \(A,B\), and any quantities they may depend on, 
such that \(\underline{c}A\leq B \leq \overline{c}A\). In addition \(C>0\) will
always denote a generic constant, whose value can change depending
on the context.}
\section{Classical multiscale method}\label{sec:CMM}
Let $ \Omega \subseteq \Rbb^d $ be a domain and \(X=\{{\bs x}_1,\ldots,
{\bs x}_N\}\subseteq\Omega\) be a set of data sites with cardinality $ N\isdef
\#X$. 
Moreover, let $f \in C(\Omega) $ be the function generating the data
values $ f_1, \dots, f_{N}$ corresponding to $ \bs x_1, \dots, \bs x_{N}$. 
Associated to the set of data sites $ X $, there are two measures, namely the
\emph{fill-distance} of $ X $ in $ \Omega $ and the \emph{separation radius} of
$ X $, i.e.,
\begin{align*}
    h_{X,\Omega} \isdef \sup_{{\bs x} \in \Omega} \min_{{\bs x} \in X}
    \| {\bs x} - {\bs x}_j \|_2,\quad
    q_X \isdef \frac{1}{2} \min_{j \neq k} \| {\bs x}_j - {\bs x}_k \|_2.
\end{align*}
We call $ X $ \emph{quasi-uniform} if there exists a constant 
$ c_{\text{qu}} > 0 $ such that
\(
    q_X \leq h_{X,\Omega} \leq c_{\text{qu}} q_X.
\)

Our goal is to interpolate or approximate the target function by means of
RBFs only relying on the given data
\(({\bs x}_1,f_1),\ldots,({\bs x}_N,f_N)\). A function 
\(\Phi\colon\Rbb^d\to\Rbb\) is said to be \emph{radial}, if and only if there
exists a univariate function \(\phi\colon[0,\infty)\to\Rbb\) such that
\(\Phi({\bs x})=\phi(\|{\bs x}\|_2)\). 
The radial function \(\Phi\) is \emph{strictly positive definite}, if and only
if there holds for any choice of distinct points
\({\bs\xi}_1,\ldots,{\bs\xi}_n\), \(n\in\Nbb\), that
\(
{\bs\alpha}^\intercal[\Phi({\bs\xi}_i-{\bs\xi}_j)]_{i,j}{\bs\alpha}>0
\ \text{for all }
{\bs\alpha}\in\Rbb^n\setminus\{{\bs 0}\}.
\)
In this case, the kernel 
\(
K({\bs x_i},{\bs x_j})
\isdef\Phi({\bs x}_i-{\bs x}_j)
\) 
is the reproducing kernel of a uniquely determined reproducing kernel
Hilbert space, its \emph{native space}, that we denote by
\(\big(\Ncal_\Phi,\langle\cdot,\cdot\rangle_{\Ncal_\Phi}\big)\).
In this context, we will use the function \(\Phi\) and its associated
kernel \(K\) synonymously.
A popular class of strictly positive RBFs are the \emph{Mat\'ern kernels} or
\emph{Sobolev splines} 
$\Phi_{\theta}\colon\Rbb^d \to \Rbb $, dependent on the hyper-parameter 
$ \theta > d/2$, that steers the smoothness of the kernel. 
These kernels are defined via the radial function
\begin{align}\label{eq:maternKernel}
\phi_{\theta}(r) = \frac{2^{1-\theta+d/2}}{\Gamma(\theta-d/2)} r^{\theta-d/2} 
K_{\theta-d/2} (r),
\quad  r \geq 0,
\end{align}
where $ \Gamma $ is the Riemann gamma function and $ K_{\nu} $ is the
modified Bessel function of the second kind of order 
\(\nu\), see \cite{MAT} for example. 
It is well-known that the Fourier transform of this radial function satisfies
\begin{align}\label{eq:algebraicDecaySobolevSpline}
\Fcal{\Phi}(\boldsymbol{\omega}) = 
(1 + \| \boldsymbol{\omega} \|_2^2)^{-\theta}, 
\quad \boldsymbol{\omega} \in \mathbb{R}^d,
\end{align}
and that, in turn, the native space of the corresponding Mat\'ern kernel
is norm-equivalent to the Sobolev space
$ H^{\theta} (\mathbb{R}^d) $, \textcolor{blue}{see
\cite[Corollary 10.13]{wendland2004scattereddata}} for example.

In this \textcolor{blue}{\emph{instationary setting}, meaning that the 
length-scale of the kernel is not coupled to the fill-distance,} it is known 
that the interpolant 
\(
s \in \Scal \isdef \operatorname{span} 
\{\Phi(\cdot - {\bs x}):{\bs x}\in X\}\subset\Ncal_\Phi
\)
converges towards the function $ f $ as $ h_{X,\Omega} \to 0 $. However,
this improved approximation comes with higher and higher numerical cost for
the iterative solution due to the ill-conditioning of the associated generalized
Vandermonde matrix. To
alleviate this cost, we can rescale the kernel, i.e., we introduce the
parameter $ \delta > 0 $ and define 
\begin{align}\label{eq:defRescaledKernel}
  \Phi_{\delta} \isdef \delta^{-d} \Phi\bigg(\frac{\cdot}{\delta}\bigg).
\end{align}
In this \emph{stationary setting}, we couple $ \delta $ proportional to
$ h_{X,\Omega} $, i.e., there is a \emph{coupling parameter} $ \eta > 0 $ such
that 
\begin{equation} \label{param:eta}
\delta = \eta h_{X,\Omega}.
\end{equation}
In this case, we can control the numerical
costs to compute the interpolant but cannot expect convergence. This is known
as the \emph{trade-off principle}.

The idea of the multiscale method is now to introduce a hierarchy
\begin{equation}\label{eq:MLhierarchy}
X_1, X_2, \ldots, X_{L} = X,
\end{equation} 
where we refer to the subscript
of \(X_\ell\) as level with cardinalities \(N_\ell\isdef\# X_\ell\) and fill-distances
$ h_{\ell} \isdef h_{X_{\ell}, \Omega} $. Although in many applications the sets
will be nested, for the multiscale method itself, this is not necessary.
We assume that the cardinalities of the sets \(X_\ell\) are increasing with the
level, i.e., $ N_{\ell} \leq N_{\ell + 1} $ or, in other words, 
$ h_{\ell} \geq h_{\ell + 1}$. We quantify this relation in 
\cref{thrm:convergenceResult}. In what follows, we consider the stationary setting 
at each level $ \ell $. This yields level-dependent parameters
$ \delta_{\ell} > 0 $ and level-dependently rescaled kernels $ K_{\ell} $ given by
\begin{equation}\label{eq:scaledKernel}
    K_{\ell}( \boldsymbol{x},\boldsymbol{y})\isdef 
    \Phi_{\ell}(\boldsymbol{x} - \boldsymbol{y}) 
    = \delta_{\ell}^{-d} \Phi \left(\frac{\boldsymbol{x} 
    - \boldsymbol{y}}{\delta_{\ell}}\right), \quad \bs x, \bs y \in \Rbb^d,\
    \ell=1,\ldots,L.
\end{equation}

Each set \(X_\ell\) in the multiscale hierarchy \eqref{eq:MLhierarchy},
together with the respective kernel, gives rise to a subspace
\begin{equation} \label{eq:subspace}
    \Scal_\ell\isdef\operatorname{span}
    \{\Phi_{\ell}(\cdot - {\bs x}):{\bs x}\in X_\ell\}\subset\Ncal_{\Phi_{\ell}},
\end{equation}
called the \emph{local approximation space}.

\textcolor{blue}{Now, given the 
multiscale sequence \(X_1,\ldots, X_L\), we seek an approximation 
in terms of successive residual corrections. To this end, we introduce functions 
$w_\ell$ of the form
\begin{equation}\label{eq:interpolant}
w_\ell=\sum_{i=1}^{N_\ell}c_{\ell, i}\Phi_{\ell}(\cdot-{\bs x}_i)\in\Scal_\ell.
\end{equation}
The coefficents $c_{\ell, i}$ are determined from decomposing the approximation
problem by a telescoping sum according to
$$
\begin{aligned}
\left.s_1\right|_{X_1}\defis \left.w_1\right|_{X_1} & =\left.f\right|_{X_1}, \\
\left.w_2\right|_{X_2} & =\left.\left(f-w_1\right)\right|_{X_2}, \\
& \ \, \vdots \\
w_L|_{X_L} & =\bigg(f-\sum_{\ell=1}^{L-1} w_\ell\bigg)\bigg|_{X_L} .
\end{aligned}
$$
This means that, at level \(\ell\), we compute the interpolant to the
residual at level $\ell - 1$, i.e.,
\begin{equation} \label{eq:residuals}
    f - (w_1 + \cdots + w_{\ell-1} )
\end{equation}
in the space $\Scal_{\ell}$.
From this, we obtain the multiscale decomposition
$
s_L\isdef(w_1+\cdots+w_L)\in\cup_{\ell=1}^L\Scal_\ell
$
which matches
$f$ at each of the data sites in $X$, i.e., \(s_L|_X=f|_X\).}

In practice, the multiscale approximation process begins with a small set of
points, where the approximation problem \eqref{eq:interpolant} is solved. Next,
the error function \eqref{eq:residuals} is computed, and finally, the latter is
approximated at a finer scale. The coefficients of the residual correction
at level $\ell$ can be computed by solving
\[
\bs K_{\ell,\ell} \bs{c}_{\ell} 
= \bs{f}_{\ell} - \sum_{\ell'=1}^{\ell-1} \bs K_{\ell,\ell ' } \bs{c}_{\ell '}
\]
with the \emph{generalized Vandermonde matrices}
\textcolor{blue}{
\(
\bs  K_{\ell,\ell ' } \isdef\big[\Phi_{\ell'}
\big({\bs x}_i^{(\ell)}-\bs{x}_j^{(\ell ')}\big)\big]_{i,j}
\in\Rbb^{N_\ell\times N_{\ell'}}
\) and the vectors
\(\bs{f}_{\ell} \isdef \bs{f}|_{X_\ell}\in\Rbb^{N_\ell},\
\bs{c}_{\ell} \isdef [{c}_{\ell,i}]_i\in\Rbb^{N_\ell}.
\)}
Consequently, the coefficient vectors \({\bs c}_1,\ldots,{\bs c}_L\) are obtained
by solving the block lower triangular system
\begin{equation}\label{eq:InterpolationSystem}
\Kcal{\bs c}\isdef\left[\begin{array}{cccc}\bs K_{1,1} & & & \\ \bs K_{2,1} &
    \bs K_{2,2} & & \\  \vdots & \vdots &
\ddots & \\ \bs K_{L,1}  &  \bs K_{L,2}&\cdots  & \bs K_{L,L}
\end{array}\right]\left[\begin{array}{c}\bs{c}_{1} \\ \bs{c}_{2}
\\ \vdots \\ \bs{c}_{L}\end{array}\right]=\left[\begin{array}{c}\bs{f}_{1} \\
\bs{f}_{2}  \\ \vdots \\ \bs{f}_{L}\end{array}\right]\defis{\bs f}.
\end{equation}

The resulting block matrix \(\Kcal\) is in general a densely-populated lower
triangular matrix as a result of the choice of globally supported RBFs for the
approximation at each level. 
There holds the following convergence result, see, e.g.,
\cite{wendland2010multiscale,wendland2017multiscale}.

\begin{theorem}\label{thrm:convergenceResult}
Let $ \Omega \subseteq \Rbb^d $ be a bounded domain with Lipschitz boundary.
Let $ X_1, X_2, \dots $ be a sequence of point sets in $ \Omega $ with 
fill-distances $ h_1, h_2, \dots $ satisfying 
    \begin{align}
        c \mu h_{\ell} \leq h_{\ell +1} \leq \mu h_{\ell}
    \end{align}
    for $ \ell = 1,2, \dots $ with fixed $ \mu \in (0,1) $, $ c \in (0,1]$
    and $ h_1 $ sufficiently small. Further, let $ \Phi\colon \R^d \to \R $ be a
    reproducing kernel for $ H^\theta(\R^d) $, i.e., its Fourier transform 
    satisfies \(
        \Fcal{\Phi}({\bs \omega}) \sim (1 + \| {\bs \omega} \|_2^2)^{-\theta},
        \ {\bs \omega} \in \R^d\)
    and let $ \Phi_{\ell} $ be the rescaled kernel as in
    \eqref{eq:defRescaledKernel} with scale factor $ \delta_{\ell}
    = \eta h_{\ell} $, \textcolor{blue}{$ \eta > 1 $}. Let $ f \in H^{\theta}(\Omega) $. 
    Then there exist constants $ C $ and $ C_1 > 0 $ such that 
    \begin{align*}
        \| f - s_L \|_{L_2(\Omega)} \leq C( C_1 \mu^\theta 
        + C_1 \eta^{-\theta})^L \| f \|_{H^{\theta}(\Omega)}.
    \end{align*}
\end{theorem}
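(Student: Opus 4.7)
The plan is to track the residuals $e_\ell \isdef f - (w_1 + \cdots + w_\ell)$, with $e_0 = f$, and to iterate a one-level $H^s$-contraction estimate. Since $w_\ell$ interpolates $e_{\ell-1}$ on $X_\ell$ by construction, $e_\ell$ vanishes on $X_\ell$; this is the structural fact driving everything below.

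The first tool is the Sobolev sampling inequality for functions with scattered zeros on a bounded Lipschitz domain: for $h$ small enough, every $g \in H^s(\Omega)$ vanishing on a point set of fill distance $h$ satisfies $\|g\|_{L_2(\Omega)} \leq C_{\mathrm{samp}}\, h^s \|g\|_{H^s(\Omega)}$. The second tool is the variational characterization of the kernel interpolant: after composition with a fixed bounded extension operator $E\colon H^s(\Omega) \to H^s(\R^d)$ (available since $\Omega$ is Lipschitz), $w_\ell$ is the minimum-$\Ncal_{\Phi_\ell}$-norm interpolant of $Ee_{\ell-1}$ on $X_\ell$, hence $\|Ee_{\ell-1} - w_\ell\|_{\Ncal_{\Phi_\ell}} \leq \|Ee_{\ell-1}\|_{\Ncal_{\Phi_\ell}}$. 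Combining \eqref{eq:algebraicDecaySobolevSpline} with the rescaling $\widehat{\Phi_\ell}(\bs\omega) = \widehat{\Phi}(\delta_\ell \bs\omega)$ and splitting the frequency domain at $\|\bs\omega\|_2 = 1/\delta_\ell$ yields, for $\delta_\ell \in (0,1]$, the norm equivalence
\[
c_1\bigl(\|g\|_{L_2(\R^d)}^2 + \delta_\ell^{2s}\|g\|_{H^s(\R^d)}^2\bigr) \leq \|g\|_{\Ncal_{\Phi_\ell}}^2 \leq c_2\bigl(\|g\|_{L_2(\R^d)}^2 + \delta_\ell^{2s}\|g\|_{H^s(\R^d)}^2\bigr),
\]
with $c_1, c_2$ independent of $\delta_\ell$. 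Applying this on both sides of the Pythagoras inequality and restricting to $\Omega$ produces the combined-norm recursion
\[
\|e_\ell\|_{L_2(\Omega)}^2 + \delta_\ell^{2s}\|e_\ell\|_{H^s(\Omega)}^2 \leq C\bigl(\|e_{\ell-1}\|_{L_2(\Omega)}^2 + \delta_\ell^{2s}\|e_{\ell-1}\|_{H^s(\Omega)}^2\bigr).
\]

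To close the recursion I would apply the sampling inequality once more, now to $e_{\ell-1}$ on $X_{\ell-1}$, using $h_{\ell-1} \leq h_\ell/\mu = \delta_\ell/(\eta\mu)$; this gives $\|e_{\ell-1}\|_{L_2(\Omega)}^2 \leq (C_{\mathrm{samp}}/(\eta\mu))^{2s}\delta_\ell^{2s}\|e_{\ell-1}\|_{H^s(\Omega)}^2$. Substituting collapses the combined-norm recursion into a one-sided $H^s$-stability estimate $\|e_\ell\|_{H^s(\Omega)} \leq C_\eta \|e_{\ell-1}\|_{H^s(\Omega)}$ with $C_\eta^2 \leq C(1 + C_{\mathrm{samp}}^{2s}(\eta\mu)^{-2s})$. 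Iterating $L$ times and applying the sampling inequality a final time to $e_L$, together with $h_L \leq \mu^{L-1}h_1$, yields
\[
\|e_L\|_{L_2(\Omega)} \leq C_{\mathrm{samp}}\, h_L^s\, C_\eta^L \|f\|_{H^s(\Omega)} \leq C\bigl(\mu^s C_\eta\bigr)^L \|f\|_{H^s(\Omega)}.
\]
Finally, $\mu^s C_\eta \leq \sqrt{C}\sqrt{\mu^{2s} + C_{\mathrm{samp}}^{2s}\eta^{-2s}} \leq C_1(\mu^s + \eta^{-s})$ by the elementary inequality $\sqrt{a^2+b^2}\leq a+b$, which matches the asserted rate.

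The main obstacle is the Fourier-analytic tracking of the norm equivalence between $\Ncal_{\Phi_\ell}$ and $H^s(\R^d)$ with constants genuinely independent of $\delta_\ell \in (0,1]$: this decoupling into an $L_2$ piece and a $\delta_\ell^{2s}$-weighted $H^s$ piece is what makes the two-norm recursion possible. Everything else (the sampling inequality, the extension operator, native-space Pythagoras) can be invoked as standard once this equivalence is in hand; the only additional care is choosing $h_1$ small enough that the sampling inequality applies at every level and $\delta_\ell = \eta h_\ell \leq 1$ throughout.
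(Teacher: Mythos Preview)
The paper does not give its own proof of this theorem; it is quoted from \cite{wendland2010multiscale,wendland2017multiscale} with the remark ``There holds the following convergence result, see, e.g., \ldots''. Your sketch is precisely the argument used in those references: the scaled norm equivalence $\|g\|_{\Ncal_{\Phi_\ell}}^2 \sim \|g\|_{L_2}^2 + \delta_\ell^{2s}\|g\|_{H^s}^2$, the native-space Pythagoras inequality for the interpolant (after a Stein extension), and the sampling inequality for functions with scattered zeros are combined into an $H^s$-recursion which is then iterated. So the approach is the same as the one the paper defers to.

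Two minor slips worth fixing. First, from the hypothesis $c\mu h_{\ell}\le h_{\ell+1}\le\mu h_\ell$ one obtains $h_{\ell-1}\le h_\ell/(c\mu)$, not $h_{\ell-1}\le h_\ell/\mu$; the inequality $h_{\ell+1}\le\mu h_\ell$ goes the other way. The missing factor $c^{-s}$ is absorbed into $C_1$ and changes nothing structurally. Second, the sampling inequality gives $\|e_{\ell-1}\|_{L_2}^2\le C_{\mathrm{samp}}^{2}h_{\ell-1}^{2s}\|e_{\ell-1}\|_{H^s}^2$, so the constant appearing in your $C_\eta$ should carry the exponent $2$, not $2s$. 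Finally, the step $\ell=1$ needs a separate (trivial) treatment, since $e_0=f$ has no prescribed zeros and the sampling inequality is not available there; one simply bounds $\|f\|_{L_2(\Omega)}\le C\|f\|_{H^s(\Omega)}$ and pays a constant depending on $\eta h_1$, which is absorbed into the leading $C$.
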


\section{Condition number bound for generalized Vandermonde matrices}
\label{sec:Cond}
We now show that the condition number of the 
generalized Vandermonde matrices $ \boldsymbol{K}_{\ell,\ell} $,
\(\ell=1,\ldots, L\), of the scaled
kernel can be bounded by a constant, independently of the level $\ell$.
A similar result
in the context of the multiscale method for compactly supported RBFs has been
derived in \cite{townsend2012multiscale} using techniques of
\cite{narcowich1994conditionnumbers,schaback1995error}. We will reuse these
techniques, with appropriate modifications. Furthermore, for a lighter
notation, we will drop the subscripts $ \ell $ in this section.

It goes without saying that, such a bound can not be established for general
sets of data sites $ X $. Hence, for this section, we assume that $ X $ is
quasi-uniform and the parameters of the multilevel method are chosen as in
\cref{thrm:convergenceResult}.

We remark that the algebraic decay of the Fourier transform of the Mat\'ern
kernel \eqref{eq:algebraicDecaySobolevSpline} immediately yields for the 
scaled RBF \(\Phi_{\delta}\) that
\(
    \Fcal{\Phi_{\delta}}(\boldsymbol{\omega}) 
    = (1 + \delta^2 \| \boldsymbol{\omega} \|_2^2)^{-\theta}, 
    \ \boldsymbol{\omega} \in \Rbb^d.
\)
We will bound the smallest and largest eigenvalue of the matrix $ {\bs K} $
separately. We follow the ideas of \cite{townsend2012multiscale,
narcowich1994conditionnumbers,schaback1995error}.





\begin{theorem}
Let $ \Omega \subseteq \Rbb^d $ be a bounded domain and $ X \subseteq \Omega $
be a quasi-uniform set of data sites with fill-distance $ h_{X,\Omega} $ and
separation radius $ q_X $.
Let $ \Phi $ be the Mat\'ern kernel \textcolor{blue}{with smoothness parameter $ \theta$}. Let $ \Phi_{\delta} 
= \delta^{-d} \Phi(\cdot / \delta) $ be the rescaled kernel where 
$ \delta = \eta h_{X,\Omega}$ with overlap parameter $ \eta > 1 $. Then there
exists a constant $ C = \textcolor{blue}{C(\Phi,d)} $ such that the the smallest 
eigenvalue $ \lambda_{\min} $ of $ \boldsymbol{K} 
= [\Phi_{\delta}(\boldsymbol{x}_i - \boldsymbol{x}_j)]_{i,j}$ satisfies
\(
        \lambda_{\min}({\bs K}) \geq C ( \eta c_{\textnormal{qu}})^{d - 2 \theta} 
        \delta^{-d},
\)
    while the largest eigenvalue of $ {\bs K} $ can be bounded from above by 
\(
        \lambda_{\max}({\bs K}) \leq \widetilde{C} \delta^{-d},
\)
    with a constant $ \widetilde{C} = \textcolor{blue}{C(\Phi,d,\eta,c_{\textnormal{qu}})} $.
\end{theorem}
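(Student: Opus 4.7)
The plan is to reduce to the unscaled Mat\'ern kernel by a change of variables and then invoke classical eigenvalue estimates on the resulting Vandermonde matrix. Introducing the rescaled sites $\bs y_i \isdef \bs x_i/\delta$, one obtains the factorization $\bs K = \delta^{-d}\widetilde{\bs K}$ with $\widetilde{\bs K} \isdef [\Phi(\bs y_i - \bs y_j)]_{i,j}$. Under $\delta = \eta h_{X,\Omega}$ together with quasi-uniformity $q_X \le h_{X,\Omega} \le c_{\textnormal{qu}} q_X$, the separation radius of $\{\bs y_i\}$, namely $\tilde q \isdef q_X/\delta$, is confined to the interval $[1/(\eta c_{\textnormal{qu}}),\, 1/\eta]$. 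It therefore suffices to bound the extreme eigenvalues of $\widetilde{\bs K}$ uniformly over all quasi-uniform configurations whose rescaled separation radius lies in this window, after which multiplication by $\delta^{-d}$ delivers the claim.

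For the smallest eigenvalue, I invoke the Narcowich--Ward--Schaback Fourier-bump estimate in the form
\[
\lambda_{\min}(\widetilde{\bs K}) \ge c_d\, \tilde q^{-d}\, \widehat{\Phi}(M_d/\tilde q),
\]
valid for strictly positive definite radial $\Phi$ with nonincreasing radial Fourier transform and dimension-only constants $c_d, M_d > 0$; this is precisely the template exploited in \cite{townsend2012multiscale,narcowich1994conditionnumbers,schaback1995error}. Substituting $\widehat \Phi(\bs\omega) = (1 + \|\bs\omega\|_2^2)^{-s}$ with $s = \nu + d/2$ and using $\tilde q \le 1/\eta \le 1$, the Fourier factor is comparable to $\tilde q^{2s}$, so $\lambda_{\min}(\widetilde{\bs K}) \gtrsim \tilde q^{2s - d}$. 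Since $2s - d = 2\nu > 0$, this lower bound is monotonically increasing in $\tilde q$, so inserting the worst case $\tilde q = 1/(\eta c_{\textnormal{qu}})$ and rescaling by $\delta^{-d}$ produces the asserted bound $\lambda_{\min}(\bs K) \ge C(\eta c_{\textnormal{qu}})^{d-2s}\delta^{-d}$.

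For the largest eigenvalue, I apply Gershgorin to $\widetilde{\bs K}$ and estimate each row sum $\sum_j |\Phi(\bs y_i - \bs y_j)|$. A packing argument based on the lower bound $\tilde q \ge 1/(\eta c_{\textnormal{qu}})$ bounds the number of rescaled sites in the annulus $\{\bs z : k \le \|\bs z - \bs y_i\|_2 < k+1\}$ by $C_d(\eta c_{\textnormal{qu}})^d (k+1)^{d-1}$. Combined with the exponential decay of the Mat\'ern radial function encoded by the Bessel asymptotic $K_\beta(r) \sim \sqrt{\pi/(2r)}\, e^{-r}$ and the continuity of $\phi_\nu$ at the origin (guaranteed by $\nu > d/2$), the convergence of $\sum_{k\ge 0}(k+1)^{d-1}e^{-ck}$ yields $\lambda_{\max}(\widetilde{\bs K}) \le C(\Phi,d,\nu,\eta,c_{\textnormal{qu}})$ and hence $\lambda_{\max}(\bs K) \le \widetilde C \delta^{-d}$.

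The principal technical obstacle is to quote the Narcowich--Ward--Schaback estimate in the precise form containing both the $\tilde q^{-d}$ prefactor and $\widehat \Phi(M_d/\tilde q)$: it is this combination that couples with the algebraic decay of $\widehat \Phi$ to yield the exponent $d - 2s$ rather than merely $-2s$, and it is crucial that $c_d, M_d$ depend only on the dimension so that the $\eta c_{\textnormal{qu}}$-dependence is surfaced cleanly through the rescaling. The remainder of the argument is routine bookkeeping of constants, with $\widetilde C$ absorbing the $(\eta c_{\textnormal{qu}})^d$ factor produced by the Gershgorin packing estimate.
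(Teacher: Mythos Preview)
Your proposal is correct and follows essentially the same route as the paper: both reduce to the unscaled matrix $\widetilde{\bs K}=\delta^d\bs K$, obtain the lower eigenvalue bound from the Narcowich--Ward--Schaback Fourier-bump estimate (the paper simply cites \cite[Proof of Proposition~3.4]{townsend2012multiscale} for this step, which is exactly the inequality you spell out), and obtain the upper bound via Gershgorin combined with an annular packing argument and the exponential decay of the modified Bessel function. The only cosmetic difference is that the paper takes annuli of width $q_X$ in the original coordinates (so the packing count is $3^d n^{d-1}$ and the $\eta,c_{\textnormal{qu}}$-dependence enters through the Bessel argument $nq_X/\delta$), whereas you take unit-width annuli in the rescaled coordinates (so the $(\eta c_{\textnormal{qu}})^d$ factor appears in the packing count instead); after the obvious change of variables these are the same computation.
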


\begin{proof}
We set 
\[
\widetilde{\bs K} = \delta^d {\bs K} 
= \bigg[\Phi\bigg( \frac{{\bs x}_i - {\bs x}_j}{\delta} \bigg) \bigg]_{i,j}.
\]
The lower bound immediately follows from 
\cite[Proof of Proposition 3.4]{townsend2012multiscale}.
Hence, we only prove the upper bound for $ \lambda_{\max}({\bs K}) $. To this
end, we apply
Gershgorin's circle theorem, see, e.g., \cite[Theorem 5.1]{wendland2017numerical}
to $ \widetilde{\bs K}$. Since the diagonal elements of this matrix are
$ \Phi ( \boldsymbol{0}) $, independently of the row $ i $, we can, without
loss of generality, set $ i = 1 $ and $ \boldsymbol{x}_1 = \boldsymbol{0} $. 
We have
    \begin{align}\label{eq:proofBoundEigenvalue1}
        | \lambda - \Phi(\boldsymbol{0}) | &\leq \sum_{k=2}^N\bigg| 
        \Phi \bigg(\frac{ \boldsymbol{0} - \boldsymbol{x}_k}{\delta} \bigg)\bigg| 
        = \sum_{k=2}^N \bigg| 
        \Phi \bigg( \frac{\boldsymbol{x}_k}{\delta} \bigg) \bigg|.
    \end{align}
    To bound the right hand side of this estimate, we introduce the annuli 
    $ E_n $ given by
    \begin{align*}
        E_n = \{ \boldsymbol{x} \in \Rbb^d \; : \; n q_X 
        \leq \| \boldsymbol{x} \|_2 < (n+1) q_X \}.
    \end{align*}
    Each $ \boldsymbol{x}_k $ is contained in exactly one $ E_n $ and we can
    estimate
    \begin{align*}
        \# \{ \boldsymbol{x}_k \in E_n \} \leq 3^d n^{d-1},
    \end{align*}
    see, e.g., \cite[Proof of Theorem 12.3]{wendland2004scattereddata}.
    Inserting this into \eqref{eq:proofBoundEigenvalue1} yields
    \begin{equation}\label{eq:proofBoundEigenvalue2}
    \begin{aligned}
        | \lambda_{\max}(\widetilde{\bs K}) - \Phi(\boldsymbol{0}) | 
        &\leq \sum_{k=2}^N \bigg| \Phi 
        \bigg( \frac{\boldsymbol{x}_k}{\delta} \bigg) \bigg|
        \leq \sum_{n=1}^{\infty} 
        \sum_{\boldsymbol{x}_k \in E_n} \bigg| \Phi \bigg( 
        \frac{\boldsymbol{x}_k}{\delta} \bigg) \bigg|\\
        &\leq \sum_{n=1}^{\infty} \# \{ \boldsymbol{x}_k \in E_n \} 
        \sup_{\boldsymbol{x} \in E_n} \bigg| 
        \Phi \bigg( \frac{\boldsymbol{x}}{\delta} \bigg) \bigg|.
    \end{aligned}
    \end{equation}
    Next, we bound $ \sup_{\boldsymbol{x} \in E_n}
    | \Phi ({\boldsymbol{x}}/{\delta})| $. We use the specific form of 
    $ \Phi $, cp.\ \eqref{eq:maternKernel}, and the fact that the function 
    $ r \mapsto r^{\beta} K_{\beta}(r) $ is non-increasing on $ (0, \infty) $
    for any $ \beta \in \Rbb$, 
    see \cite[Corollary 5.12]{wendland2004scattereddata}. This yields 
    \begin{align*}
        &\hspace{-2em}\frac{\Gamma\left(\theta-\frac{d}{2}\right)}{2^{1 - \theta+\frac{d}{2}}} 
        \sup_{\boldsymbol{x} \in E_n} \bigg| 
        \Phi \bigg( \frac{\boldsymbol{x}}{\delta} \bigg) \bigg| 
        = \sup_{\boldsymbol{x} \in E_n} \bigg( 
        \frac{\| \boldsymbol{x} \|_2}{\delta} \bigg)^{\theta - \frac{d}{2}} 
        K_{\theta - \frac{d}{2}}
        \bigg( \frac{\| \boldsymbol{x} \|_2}{\delta} \bigg) \\
        &\leq \bigg(\frac{n q_X}{\delta} \bigg)^{\theta - \frac{d}{2}} 
        K_{\theta - \frac{d}{2}}\bigg(\frac{n q_X}{\delta} \bigg) \\
        &\leq \bigg(\frac{n q_X}{\delta} \bigg)^{\theta - \frac{d}{2}} 
        \sqrt{2 \pi} \bigg(\frac{\delta}{n q_X}\bigg)^{\frac{1}{2}} 
        \exp\bigg(- \frac{n q_X}{\delta} \bigg) 
        \exp \Bigg(\bigg( \theta - \frac{d}{2} \bigg)^2 \frac{2 \delta}{2n q_X}\Bigg),
    \end{align*}
    where we used the asymptotic behavior 
    \cite[Lemma 5.13]{wendland2004scattereddata} of the modified Bessel function 
    to arrive at the last inequality. Inserting this into 
    \eqref{eq:proofBoundEigenvalue2}, using the quasi-uniformity of $ X $ 
    and the coupling of $ \delta $ to $ h_{X, \Omega} $,
    as in \cref{thrm:convergenceResult}, yields
    \begin{align*}
        | \lambda_{\max}(\widetilde{\bs K}) - \Phi(\boldsymbol{0}) | 
        &\leq \sum_{n=1}^{\infty} \# \{ \boldsymbol{x}_k \in E_n \} 
        \sup_{\boldsymbol{x} \in E_n} \bigg| 
        \Phi \bigg( \frac{\boldsymbol{x}}{\delta} \bigg) \bigg| \\
        & \leq \sum_{n=1}^{\infty} 3^d n^{d-1} \sqrt{2 \pi} 
        \eta^{\frac{d}{2} - \theta} (\eta c_{\text{qu}})^{\frac{1}{2}} 
        n^{\theta - \frac{d}{2} - \frac{1}{2}} 
        \exp \bigg(- \frac{n}{\eta} + \frac{\left(\theta - \frac{d}{2}\right)^2 
        \eta c_{\text{qu}}}{n}\bigg) \\
        &\leq \sqrt{2 \pi} 3^d  \eta^{\frac{d}{2} - \theta} 
        (\eta c_{\text{qu}})^{\frac{1}{2}} \sum_{n=1}^{\infty} 
        n^{\theta - \frac{d}{2} - \frac{3}{2}} \exp \bigg(- \frac{n}{\eta} 
        + \frac{\left(\theta - \frac{d}{2}\right)^2 \eta c_{\text{qu}}}{n} \bigg).
    \end{align*}
    Since the series on the right hand side of the estimate converges,
    this yields the claimed bound for $ \lambda_{\max}$.
\end{proof}

The derived bounds on $ \lambda_{\min} $ and $ \lambda_{\max} $ lead to 
the boundedness of the condition number of the generalized Vandermonde matrices
\({\bs K}_{\ell,\ell}\) for \(\ell=1,\ldots,L\), uniformly in the number 
of data sites and the scaling parameter. \textcolor{blue}{In addition, 
by a suitable diagonal scaling, also the lower triangular system
in \eqref{eq:InterpolationSystem} becomes well conditioned. We have the
following}

\begin{corollary}\label{cor:diagCondition}
    For every $ \ell \in \Nbb $ the condition number
    $ \operatorname{cond}_2(\boldsymbol{K}_{\ell,\ell}) $ is bounded
    independently of $ N_{\ell} $ and $ \delta_{\ell} $.
\textcolor{blue}{Furthermore, introducing the block diagonal scaling
\(
\Dcal\isdef\diag(\delta_1^d{\bs I},\ldots,\delta_L^d{\bs I}),
\)
the diagonally scaled matrix \(\Dcal^{1/2}\Kcal\Dcal^{1/2}\), cp.\ 
\eqref{eq:InterpolationSystem} is well conditioned, i.e., the
corresponding condition number satisfies
\(\operatorname{cond}_2(\Dcal^{1/2}\Kcal\Dcal^{1/2})
\sim \operatorname{cond}_F(\Dcal^{1/2}\Kcal\Dcal^{1/2})\sim 1\).}
\end{corollary}

\section{Numerical approximation of the multiscale system}\label{sec:AppErr}

\textcolor{blue}{
In this paragraph, we discuss the approximation error issuing from
replacing the matrix \(\Kcal\) of the multiscale
system in \eqref{eq:InterpolationSystem} by an approximation \(\widehat{\Kcal}\).
}

\textcolor{blue}{
As has been observed in Corollary~\ref{cor:diagCondition}, the
multiscale system becomes well-conditioned by suitable diagonal scaling.
For our analysis, we therefore consider the preconditioned matrix
\(\Kcal_{\mathrm{pc}}\isdef\Dcal^{1/2}\Kcal\Dcal^{1/2}\) and its
numerical approximation 
\(\widehat{\Kcal}_{\mathrm{pc}}\isdef\Dcal^{1/2}\widehat{\Kcal}\Dcal^{1/2}\).
We have the following standard perturbation result, see, e.g.,
\cite{Hig96}.
\begin{lemma}\label{lem:perError} 
Let \(\widehat{\Kcal}\) be sufficiently close to \(\Kcal\) and
let \(\Kcal_{\mathrm{pc}}{\bs d}=\Dcal^{1/2}{\bs f}\) and
\(\widehat{\Kcal}_{\mathrm{pc}}\widehat{\bs d}=\Dcal^{1/2}{\bs f}\). Then there exists a constant \(C>0\) such that the
relative consistency error satisfies
\[
\frac{\|{\bs d}-\widehat{\bs d}\|_2}{\|{\bs d}\|_2}
\leq C
\frac{\|\Kcal_{\mathrm{pc}}-\widehat{\Kcal}_{\mathrm{pc}}\|_F}{\|\Kcal_{\mathrm{pc}}\|_F}.
\]
\end{lemma}
\begin{proof}The proof follows similarly to the proof of 
\cite[Theorem 7.2]{Hig96}. There holds 
\[
\Kcal_{\mathrm{pc}}(\widehat{\bs d}-{\bs d})
=-(\widehat{\Kcal}_{\mathrm{pc}}-\Kcal_{\mathrm{pc}}){\bs d}
+(\widehat{\Kcal}_{\mathrm{pc}}-\Kcal_{\mathrm{pc}})({\bs d}-\widehat{\bs d})
\]
Hence, by multiplying by \(\Kcal_{\mathrm{pc}}^{-1}\),
taking norms and exploiting the consistency of the Fro\-ben\-ius and the
Euclidean norm, we obtain
\[
\|{\bs d}-\widehat{\bs d}\|_2\leq\|\Kcal_{\mathrm{pc}}^{-1}\|_F
\|\widehat{\Kcal}_{\mathrm{pc}}-\Kcal_{\mathrm{pc}}\|_F\|{\bs d}\|_2+
 \|\Kcal_{\mathrm{pc}}^{-1}\|_F
\|\widehat{\Kcal}_{\mathrm{pc}}-\Kcal_{\mathrm{pc}}\|_F\|{\bs d}-\widehat{\bs d}\|_2.
\]
Factoring out common terms and simplifying the expression then results in
\[
\|{\bs d}-\widehat{\bs d}\|_2\leq
\frac{\|\Kcal_{\mathrm{pc}}^{-1}\|_F
\|\widehat{\Kcal}_{\mathrm{pc}}-\Kcal_{\mathrm{pc}}\|_F}{1-\|\Kcal_{\mathrm{pc}}^{-1}\|_F
\|\widehat{\Kcal}_{\mathrm{pc}}-\Kcal_{\mathrm{pc}}\|_F}\|{\bs d}\|_2.
\]
Dividing by \(\|{\bs d}\|_2\) and expanding the respective terms in the
numerator and denominator by 
\(\|\Kcal_{\mathrm{pc}}\|_F\) finally gives
\[
\frac{\|{\bs d}-\widehat{\bs d}\|_2}{\|{\bs d}\|_2}
\leq\frac{\operatorname{cond}_F(\Kcal_{\mathrm{pc}})}
{1-\operatorname{cond}_F(\Kcal_{\mathrm{pc}})
\|\Kcal_{\mathrm{pc}}-\widehat{\Kcal}_{\mathrm{pc}}\|_F/\|\Kcal_{\mathrm{pc}}\|_F}
\frac{\|\Kcal_{\mathrm{pc}}-\widehat{\Kcal}_{\mathrm{pc}}\|_F}
{\|\Kcal_{\mathrm{pc}}\|_F}.
\]
For a sufficiently small consistency error, we arrive at the claimed bound
using Corollary~\ref{cor:diagCondition}.
\end{proof}}

\textcolor{blue}{
To connect the previous error estimate to the consistency error of the 
multiscale interpolant, we require the following technical lemma.
\begin{lemma}\label{lem:normEq} Consider any function
\(v_\ell=\sum_{i=1}^{N_\ell} a_i
\Phi_\ell(\cdot-{\bs x}_i^{(\ell)})\in\Scal_\ell\). Then, there holds 
\[
\|v_\ell\|_{\Ncal_{\Phi_\ell}}\sim\delta_\ell^{-d/2}\|{\bs a}\|_{2}\quad\text{as well as}\quad
\|v_\ell|X_\ell\|_{\ell^2}\sim\delta_\ell^{-d}\|{\bs a}\|_{2},
\]
where \({\bs a}\isdef[a_1,\ldots, a_{N_\ell}]^\intercal\) is the coefficient
vector of \(v_\ell\).
\end{lemma}
\begin{proof}
By
the reproducing property and due to
\(\|{\bs K}_{\ell,\ell}\|_2\sim\|{\bs K}_{\ell,\ell}^{-1}\|_2\sim\delta^{-d}_\ell\), we obtain
\[
\|v_\ell\|_{\Ncal_{\Phi_\ell}}^2={\bs v}^\intercal{\bs K}_{\ell,\ell}{\bs a}
\sim\|{\bs K}_{\ell,\ell}\|_2\|{\bs a}\|_2^2\sim
\delta^{-d}_\ell\|{\bs a}\|_2^2.
\]
Taking square roots yields the first claim.
Similarly, we observe
\[
\|v_\ell|X_\ell\|_{\ell^2}^2=\|{\bs K}_{\ell,\ell}{\bs a}\|_2^2
={\bs a}^\intercal{\bs K}_{\ell,\ell}^2{\bs a}\sim\delta_\ell^{-2d}\|{\bs a}\|_2^2
\]
and the second claim is again obtained by taking square roots.
\end{proof}
}
\textcolor{blue}{
Combining the previous two lemmata, we obtain a bound on the consistency error of the
residual corrections.
\begin{theorem}\label{Theorem:stability}
Let \(w_\ell=\sum_{i=1}^{N_\ell}c_{\ell, i}\Phi_{\ell}(\cdot-{\bs x}_i)\in\Scal_\ell\) 
for \(\ell=1,\ldots, L\) denote the residual corrections obtained from the solution
of \(\Kcal{\bs c}={\bs f}\), see \eqref{eq:InterpolationSystem}, and let 
\(\widehat{w}_\ell=\sum_{i=1}^{N_\ell}\widehat{c}_{\ell, i}\Phi_{\ell}(\cdot-{\bs x}_i)\in\Scal_\ell\) 
for \(\ell=1,\ldots, L\) be the residual corrections obtained from 
solving the perturbed system \(\widehat{\Kcal}\widehat{\bs c}={\bs f}\). Then, there holds
\[
\frac{\sum_{\ell=1}^L\|w_\ell-\widehat{w}_\ell\|_{\Ncal_{\Phi_\ell}}^2}
{\sum_{\ell=1}^L\|w_\ell\|_{\Ncal_{\Phi_\ell}}^2}\leq C
\frac{\big\|\Kcal_{\mathrm{pc}}-\widehat{\Kcal}_{\mathrm{pc}}\big\|_F^2}{\|\Kcal_{\mathrm{pc}}\|_F^2}
\]
for a constant \(C>0\).
\end{theorem}
\begin{proof}
Since the solutions to the original systems and their preconditioned versions
are connected via \({\bs d}=\Dcal^{-1/2}{\bs c}\) and \(\widehat{\bs d}=\Dcal^{-1/2}\widehat{\bs c}\),
respectively, the first equivalence in Lemma~\ref{lem:normEq} yields
\[
\|w_\ell-\widehat{w}_\ell\|_{\Ncal_{\Phi_\ell}}^2\sim
\delta^{-d}_\ell\|{\bs c}_\ell-\widehat{\bs c}_\ell\|_2^2
=\|{\bs d}_\ell-\widehat{\bs d}_\ell\|_2^2
\]
as well as
\[
\|w_\ell\|_{\Ncal_{\Phi_\ell}}^2\sim
\delta^{-d}_\ell\|{\bs c}_\ell\|_2^2
=\|{\bs d}_\ell\|_2^2.
\]
In view of 
\(\|{\bs d}-\widehat{\bs d}\|_2^2=\sum_{\ell=1}^L\|{\bs d}_\ell-\widehat{\bs d}_\ell\|_2^2\)
and \(\|{\bs d}\|_2^2=\sum_{\ell=1}^L\|{\bs d}_\ell\|_2^2\),
we arrive at
\[
\frac{\sum_{\ell=1}^L\|w_\ell-\widehat{w}_\ell\|_{\Ncal_{\Phi_\ell}}^2}
{\sum_{\ell=1}^L\|w_\ell\|_{\Ncal_{\Phi_\ell}}^2}
\sim\frac{\|{\bs d}-\widehat{\bs d}\|_2^2}{\|{\bs d}\|_2^2}
\leq C
\frac{\big\|\Kcal_{\mathrm{pc}}-\widehat{\Kcal}_{\mathrm{pc}}\big\|_F^2}{\|\Kcal_{\mathrm{pc}}\|_F^2}
\]
by employing Lemma~\ref{lem:perError}. This completes the proof.
\end{proof}
}

\textcolor{blue}{
To bound the numerical approximation error between \(s_L\)
and \(\widehat{s}_L\isdef \widehat{w}_1 + \dots + \widehat{w}_L\), we have the following result,
which links this error to the approximation error of the residual corrections.
\begin{lemma}\label{lem:mlConvergence}
Under the conditions of Theorem~\ref{Theorem:stability}, the approximation 
\(\widehat{s}_L\isdef \widehat{w}_1 + \dots + \widehat{w}_L\) to \(s_L\)
satisfies the estimate
\[
\|s_L-\widehat{s}_L\|_{L^2(\Omega)}^2\leq CL\sum_{\ell=1}^L
\|w_\ell-\widehat{w}_\ell\|_{\Ncal_{\Phi_\ell}}^2.
\]
for some constant \(C>0\).
\end{lemma}
\begin{proof}
From the Cauchy–Schwarz inequality, it follows that  
\[
\|s_L-\widehat{s}_L\|^2_{L^2(\Omega)} = \bigg\|\sum_{\ell=1}^L 
(w_\ell - \widehat{w}_\ell)\bigg\|^2_{L^2(\Omega)}
\leq L \sum_{\ell=1}^L \|w_\ell - \widehat{w}_\ell\|^2_{L^2(\Omega)}.
\]
Next, we consider the sampling inequality from \cite[Theorem 3.5]{Madych06}, i.e.,
\[
    \|u\|_{L^p} \leq C\Big(h_{X,\Omega}^\theta \| u\|_{H^\theta(\Omega)} 
    + h_{X,\Omega}^{d/p} \|u|X\|_{\ell^2}\Big),\quad C > 0.
\]
Letting \(p=2\) and squaring the previous inequality results in
\[
    \|u\|_{L^2(\Omega)}^2 \leq 2C\Big(
    h_{X,\Omega}^{2\theta}\| u\|_{H^\theta(\Omega)}^2 
    + h_{X,\Omega}^{d} \|u|X\|_{\ell^2}^2\Big).
\]
Inserting this in the first inequality, we arrive at
\[
\|s_L-\widehat{s}_L\|^2_{L^2(\Omega)}
\leq 2C L\sum_{\ell=1}^L
    \big(h_{\ell}^{2\theta}\|w_\ell-\widehat{w}_\ell\|_{H^\theta(\Omega)}^2 
    + h_{\ell}^{d} \|w_\ell-\widehat{w}_\ell|X_\ell\|_{\ell^2}^2\big).
\]
Next, we have from \cite[Lemma 1]{wendland2010multiscale} that
\[
\| w_\ell - \widehat{w}_\ell\|^2_{H^\theta(\Omega)}\leq C
\delta_\ell^{-2\theta} \| w_\ell - \widehat{w}_\ell\|^2_{\Ncal_{\Phi_\ell}},\quad C>0,
\]
while the second equivalence in Lemma~\ref{lem:normEq} amounts to
\[
\|w_\ell-\widehat{w}_\ell|X_\ell\|_{\ell^2}^2\sim
\delta_\ell^{-2d}\|{\bs c}_\ell-\widehat{\bs c}_\ell\|_2^2\sim\delta_\ell^{-d}
\| w_\ell - \widehat{w}_\ell\|^2_{\Ncal_{\Phi_\ell}}.
\]
Inserting these two estimates in the previous one yields with
 \(\delta_\ell=\eta h_\ell\), cp.\ \eqref{param:eta}, that
\[
\|s_L-\widehat{s}_L\|^2_{L^2(\Omega)}
\leq C L\sum_{\ell=1}^L
    \big(\eta^{-2\theta}\| w_\ell - \widehat{w}_\ell\|^2_{\Ncal_{\Phi_\ell}}
    + \eta^{-d}\| w_\ell - \widehat{w}_\ell\|^2_{\Ncal_{\Phi_\ell}}\big),
\]
for some constant \(C>0\), from which the claim is immediately obtained.
\end{proof}
}

\textcolor{blue}{
Finally, we need a bound on the residual corrections by the right
hand side \(f\).
\begin{lemma}\label{lem:mlStability} Under the conditions of 
Theorem~\ref{thrm:convergenceResult}, there exists a
constant \(C>0\) such that
\[
\sum_{\ell=1}^L\|w_\ell\|_{\Ncal_{\Phi_\ell}}^2\leq C\|f\|^2_{H^\theta(\Omega)}.
\]
\end{lemma}
\begin{proof}
By construction, the residual correction $ w_{\ell} $ 
is the norm-minimal interpolant to the residual 
$ e_{\ell -1}\isdef f - \sum_{m=1}^{\ell-1}w_m$, 
for any $ 1 \leq \ell \leq L $, which implies
\begin{align*}
    \|w_{\ell} \|_{\Ncal_{\Phi_{\ell}}} \leq \| E e_{\ell-1} ||_{\Ncal_{\Phi_{\ell}}}
\end{align*}
with the continuous extension operator \(E\colon H^\theta(\Omega)\to H^\theta(\Rbb^d)\),
\(\|Ef\|_{H^\theta(\Rbb^d)}\leq C_E\|f\|_{H^\theta(\Omega)}\).
Now we can use the recursion estimate \cite[Theorem 1]{wendland2010multiscale} to obtain
\begin{align*}
    \| E e_{\ell-1} ||_{\Ncal_{\Phi_{\ell}}} \leq (C_1 \mu^{\theta} + C_1 \eta^{-\theta}) \| E e_{\ell-2} \|_{\Ncal_{\Phi_{\ell-1}}}.
\end{align*}
Recurring $ \ell -2 $ times, we arrive at the bound 
\begin{align*}
    \|w_{\ell} \|_{\Ncal_{\Phi_{\ell}}}\leq\| E e_{\ell-1} ||_{\Ncal_{\Phi_{\ell}}} &\leq (C_1 \mu^{\theta} + C_1 \eta^{-\theta})^{\ell-2} \| E e_{0} \|_{\Ncal_{\Phi_{1}}} \\
    &=(C_1 \mu^{\theta} + C_1 \eta^{-\theta})^{\ell-2} \| Ef \|_{\Ncal_{\Phi_{1}}} \\
    &\leq C_E (C_1 \mu^{\theta} + C_1 \eta^{-\theta})^{\ell-2} \delta_1^{-\theta} \|f \|_{H^{\theta}(\Omega)}.
\end{align*}
By this bound and by employing the summation formula 
for the geometric series, we finally estimate
\begin{align*}
\sum_{\ell=1}^L\|w_\ell\|_{\Ncal_{\Phi_\ell}}^2
&\leq\sum_{\ell=1}^L
C_E^2c(C_1 \mu^{\theta} + C_1 \eta^{-\theta})^{2(\ell-2)} \delta_1^{-2\theta} \|f \|_{H^{\theta}(\Omega)}^2\\
&=\frac{C_E^2
\delta_1^{-2\theta}}{(C_1 \mu^{\theta} + C_1 \eta^{-\theta})^{2}} \|f \|_{H^{\theta}(\Omega)}^2
\sum_{\ell=0}^{L-1}(C_1 \mu^{\theta} + C_1 \eta^{-\theta})^{2\ell}\\ 
&=\frac{C_E^2
\delta_1^{-2\theta}}{(C_1 \mu^{\theta} + C_1 \eta^{-\theta})^{2}}
\frac{1-(C_1 \mu^{\theta} + C_1 \eta^{-\theta})^{2L}}
{1-(C_1 \mu^{\theta} + C_1 \eta^{-\theta})^{2}}
\|f \|_{H^{\theta}(\Omega)}^2.
\end{align*}
This completes the proof.
\end{proof}
}

\textcolor{blue}{
Combining the previous lemmata with
Theorem~\ref{thrm:convergenceResult}, we arrive at the final error estimate.
\begin{theorem}\label{thm:combinedErrorEstimate}
Under the conditions of Theorems~\ref{thrm:convergenceResult} and
\ref{Theorem:stability}, there exists a
constant \(C>0\) such that
\[
\|f-\widehat{s}_L\|_{L^2(\Omega)}\leq C
\Bigg(( C_1 \mu^\theta 
        + C_1 \eta^{-\theta})^L 
        +\sqrt{L}\frac{\big\|\Kcal_{\mathrm{pc}}-\widehat{\Kcal}_{\mathrm{pc}}\big\|_F}{\|\Kcal_{\mathrm{pc}}\|_F}
        \Bigg)
        \| f \|_{H^{\theta}(\Omega)}.
        \]
\end{theorem}
\begin{proof}
By the triangle inequality, there holds
\[
\|f-\widehat{s}_L\|_{L^2(\Omega)}\leq\|f-{s}_L\|_{L^2(\Omega)}+\|s_L-\widehat{s}_L\|_{L^2(\Omega)}.
\]
The first term on the right hand side is bounded using 
Theorem~\ref{thrm:convergenceResult}.
For the second term, we obtain using Lemmata\ \ref{lem:mlConvergence} and
\ref{lem:mlStability} as well as Theorem~\ref{Theorem:stability} that
\begin{align*}
\|s_L-\widehat{s}_L\|_{L^2(\Omega)}^2&\leq
C L\sum_{\ell=1}^L
   \| w_\ell - \widehat{w}_\ell\|^2_{\Ncal_{\Phi_\ell}}
\leq C L\frac{\big\|\Kcal_{\mathrm{pc}}-\widehat{\Kcal}_{\mathrm{pc}}\big\|_F^2}{\|\Kcal_{\mathrm{pc}}\|_F^2}
   \sum_{\ell=1}^L\| w_\ell\|^2_{\Ncal_{\Phi_\ell}}\\
   &\leq
   C L\frac{\big\|\Kcal_{\mathrm{pc}}-\widehat{\Kcal}_{\mathrm{pc}}\big\|_F^2}{\|\Kcal_{\mathrm{pc}}\|_F^2}
   \|f\|_{H^\theta(\Omega)}^2.
\end{align*}
Taking square roots and inserting into the first inequality yields the claim.
\end{proof}
}
\section{Samplets for scattered data approximation}\label{SampletsSection}
Samplets are discrete signed measures, which exhibit vanishing moments. We
briefly recall their underlying concepts as introduced in
\cite{harbrecht2022samplets}. To this end, we consider the image of the
spaces \(\Scal_\ell\), \(\ell=1,\ldots,L\), \textcolor{blue}{defined in 
Equation~\eqref{eq:subspace}}, under the Riesz isomorphism
\(\Jcal\colon\Ncal_{\Phi_{\ell}}\to\Ncal'_{\Phi_{\ell}}\)
and define
\begin{equation} \label{eq:spaceS}
\Scal_\ell'\isdef\operatorname{span}\{\Jcal\Phi_{\ell}(\cdot - {\bs x}):
{\bs x}\in X_\ell\}=
\operatorname{span}\{\delta_{\bs x}: {\bs x}\in X_\ell\},
\end{equation}
by the identity \(\Jcal \Phi_{\ell}(\cdot - {\bs x})=\delta_{\bs x}\), where
\(\delta_{\bs x}\) is the point evaluation functional
at \({\bs x}\in\Omega\). We equip the spaces \(\Scal_\ell'\) with an inner
product, different from the
canonical one, by the defining property \(\langle\delta_{{\bs x}_i},
\delta_{{\bs x}_j}\rangle_{\Scal_\ell'}\isdef\delta_{ij}\)
for \({\bs x}_i,{\bs x}_j\in X_\ell\).

For the sake of a more lightweight notation, we drop the subscript \(\ell\) of
the spaces \(\Scal_\ell'\)
for the remainder of this paragraph and remark that each space \(\Scal_{\ell}'\)
will in general have a different multiresolution analysis, where
the maximum level \(J\) depends on \(\ell\).
Given a multiresolution analysis
\(
\Vcal_{0}'\subset\Vcal_{1}'\subset\cdots\subset\Vcal_{J}'=\Scal',
\)
we keep track of the increment of information between two consecutive levels $j$
and $j+1$. 
Since there holds $\Vcal_{j}'\subset \Vcal_{j+1}'$, we may
orthogonally decompose 
\(
\Vcal_{j+1}' = \Vcal_{j}'{\oplus}\Wcal_{j}'
\)
for a certain \emph{detail space} $\Wcal_{j}'\perp \Vcal_{j}'$. In analogy to wavelet
nomenclature, we call the elements of a basis of \(\Vcal_{0}'\) 
\emph{scaling distributions} and the elements of a basis of one of the spaces
$\Wcal_{j}'$ \emph{samplets}. This name is motivated by the idea that the basis
distributions in $\Wcal_{j}'$ are supported at a small subsample or samplet of
the data sites in \(X\). The collection of the bases of $\Wcal_{j}'$ for
\(j=0,\ldots,J-1\)\/ together with a basis of \(\Vcal_{0}'\) is called a
\emph{samplet basis} for \(\Scal'\). A samplet basis can be constructed such
that it exhibits \emph{vanishing moments} of order \(q+1\), i.e.,
\begin{equation}\label{eq:vanishingMoments}
\textcolor{blue}{\sigma_{j,k}(p)
 = 0\quad\text{for all}\ p\in\Pcal_q,}
\end{equation}
where \(\sigma_{j,k}\in \Wcal_{j}'\) is a samplet
and \(\Pcal_q\isdef\operatorname{span}
\{{\bs x}^{\bs\alpha}:\|\bs\alpha\|_1\leq q\}\)
denotes the space of polynomials of total degree at most \(q\).

The samplet construction for \(\Scal'\) is based on a \emph{cluster tree} for
the set of data sites \(X\), i.e., a tree $\mathcal{C}$ with root \(X\) such
that each node \(\tau\in\Ccal\) is the disjoint union of its children.
Such a cluster tree \(\Ccal\) directly induces a support based hierarchical
clustering of the subspace \(\Scal'\), spanned by the 
Dirac-$\delta$-distributions supported at the data sites in \(X\).
With a slight abuse of notation, we will refer to this cluster tree also by
\(\Ccal\) and to its nodes by \(\tau\). 
Given a cluster tree \(\Ccal\), a samplet basis for \(\Scal'\), where
\(\dim\Scal'=N\), can be constructed with cost \(\Ocal(N)\). Assuming 
furthermore, that \(\Ccal\) is a balanced binary tree with maximum level
\(J=\lceil\log_2(N)\rceil\), the samplet basis exhibits the 
following properties, see \cite{harbrecht2022samplets} but also 
\cite{HKS05,TW03}.

\begin{theorem}\label{theo:waveletProperties}
The samplet basis \(\bigcup_{j=0}^J\{\sigma_{j,k}\}_k\)
forms an orthonormal basis in $\Scal'$, satisfying
the following properties:
\begin{enumerate}
\item There holds $c^{-1} 2^j\leq\operatorname{dim}\Wcal_{j}'\leq c 2^j$
for a constant \(1<c\leq 2\).
\item 
The samplets have vanishing moments of order $q+1$, 
i.e., 
\((p,\sigma_{j,k})_\Omega = 0\) for all \(p\in\Pcal_q(\Omega)\),
 where \(\Pcal_q(\Omega)\) is the space of polynomials up 
 to degree \(q\).
\item 
The coefficient vector 
${\bs\omega}_{j,k}=\big[\omega_{j,k,i}\big]_i$ 
of the samplet $\sigma_{j,k}$ satisfies the bound 
\(
\|{\bs\omega}_{j,k}\|_{1}\leq c 2^{(J-j)/2}\) 
for a constant \(0<c<2\).
\item Given $f\in C^{q+1}(O)$, for some open set 
  \(O\supset\supp\sigma_{j,k}\), there holds 
\begin{equation*}
\begin{aligned}
|\sigma_{j,k}(f)|\le \bigg(\frac{d}{2}\bigg)^{q+1}
  	\frac{(\operatorname{diam}\supp\sigma_{j,k})^{q+1}}{(q+1)!}
	\|f\|_{C^{q+1}(O)}\|{\bs\omega}_{j,k}\|_{1}.
	\end{aligned}
\end{equation*}
\item For quasi-uniform sets \(X\),
there holds $\operatorname{diam}\supp\sigma_{j,k}\leq c2^{-j/d}$
for some constant \(c>0\).
\end{enumerate}
\end{theorem}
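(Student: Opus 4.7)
The plan is to reduce each of the five claims to a combinatorial property of the balanced binary cluster tree $\Ccal$ together with the algebraic construction of the samplet basis within each cluster. The construction, as in \cite{harbrecht2022samplets}, proceeds bottom up: on each leaf cluster $\tau$, one starts from the Dirac basis $\{\delta_{\bs x}:\bs x\in\tau\}$, builds scaling distributions by discretely orthonormalising the monomials $\bs x^{\bs\alpha}$, $\|\bs\alpha\|_1\le q$, with respect to $\langle\cdot,\cdot\rangle_{\Scal'}$, and declares the orthonormal complement to be the samplets on $\tau$; at an internal cluster, one orthonormalises the scaling distributions inherited from the children, again yielding new scaling distributions (of fixed number $m=\dim\Pcal_q$) and samplets as the orthonormal complement. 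Orthonormality of the whole family $\bigcup_{j,k}\{\sigma_{j,k}\}$ is then immediate: two samplets with disjoint cluster supports are orthogonal by support, while samplets living in the same cluster at the same level are orthogonal by construction; samplets on nested clusters are orthogonal because the finer one lives in a detail space $\Wcal_j'$ that is the orthogonal complement of $\Vcal_j'$, which contains the coarser scaling distribution whose refinement produced it.

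For (1) I would count. If $\tau$ has $n_\tau$ points, then exactly $n_\tau-m$ samplets are created at $\tau$ (and $m$ scaling distributions are passed up), so the total number of samplets at level $j$ equals the total number of points in clusters at level $j$ minus $m$ times the number of such clusters. Since $\Ccal$ is a balanced binary tree with $J=\lceil\log_2 N\rceil$, the number of level-$j$ clusters lies in $[c_1 2^j,c_2 2^j]$, which yields both the upper and lower bound on $\dim\Wcal_j'$. For (2), vanishing moments are built into the construction: the scaling distributions at $\tau$ span exactly the image of $\Pcal_q$ under the sampling operator, and the samplets at $\tau$ are their discrete orthogonal complement inside $\Scal'(\tau)$; for any $p\in\Pcal_q$ one has $(p,\sigma_{j,k})_\Omega=\sum_i\omega_{j,k,i}p(\bs x_i)=\langle\sigma_{j,k},\Pi p\rangle_{\Scal'}=0$, where $\Pi p$ denotes the scaling distribution carrying the values $p(\bs x_i)$.

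For (3) I would use orthonormality and a cardinality count of $\supp\sigma_{j,k}$. Since $\|\sigma_{j,k}\|_{\Scal'}=1$, the Parseval identity with respect to $\langle\delta_{\bs x_i},\delta_{\bs x_j}\rangle_{\Scal'}=\delta_{ij}$ gives $\|\bs\omega_{j,k}\|_2=1$. Cauchy--Schwarz then yields $\|\bs\omega_{j,k}\|_1\le\sqrt{|\supp\sigma_{j,k}|}$. Because $\sigma_{j,k}$ is supported in a single level-$j$ cluster and $\Ccal$ is balanced binary, that cluster contains at most $c\cdot 2^{J-j}$ data sites, producing the bound $c 2^{(J-j)/2}$. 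For (4), the argument is Taylor expansion: pick any $\bs x_0\in\supp\sigma_{j,k}$ and write $f=T_qf+R_qf$ with $T_qf\in\Pcal_q$. By (2), $\sigma_{j,k}(T_qf)=0$, so
\[
|\sigma_{j,k}(f)|=|\sigma_{j,k}(R_qf)|
\le\|\bs\omega_{j,k}\|_1\max_{\bs x\in\supp\sigma_{j,k}}|R_qf(\bs x)|.
\]
The multivariate Taylor remainder bound in the $C^{q+1}$ norm contributes the factor $(d/2)^{q+1}(\diam\supp\sigma_{j,k})^{q+1}/(q+1)!$, delivering (4) verbatim.

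Finally for (5), quasi-uniformity is essential: since $q_X\sim h_{X,\Omega}$ and $h_{X,\Omega}\sim N^{-1/d}$, every ball of radius $r$ contains $\Ocal((r/q_X)^d)$ sites; conversely, a cluster of $n_\tau$ points must have diameter at least of order $n_\tau^{1/d}\cdot q_X$, and a geometric/cardinality balanced cluster tree built on a quasi-uniform set can be arranged so that $\diam\tau\lesssim n_\tau^{1/d}\cdot h_{X,\Omega}$. With $n_\tau\lesssim 2^{J-j}$ and $N\sim 2^J$, this gives $\diam\supp\sigma_{j,k}\lesssim 2^{(J-j)/d}\cdot N^{-1/d}\sim 2^{-j/d}$. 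The main obstacle is this last geometric step: (1)--(4) are essentially linear-algebraic bookkeeping on the tree, but (5) requires that the cluster tree used in the construction is not only cardinality-balanced but also geometrically well-shaped (bounded-aspect-ratio bounding boxes split along longest edges), so that diameter decays like the $d$-th root of cardinality. I would therefore invoke the standard cardinality-balanced box-split cluster tree for scattered data, under which the diameter estimate follows from the quasi-uniform packing/covering count, and refer to \cite{harbrecht2022samplets,HKS05,TW03} for the detailed verification that such a tree can be built in $\Ocal(N\log N)$ time.
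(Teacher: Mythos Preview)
The paper does not prove this theorem; it states the result and refers to \cite{harbrecht2022samplets} (and also \cite{HKS05,TW03}) for the proof. Your sketch is essentially the argument one finds in those references: the bottom-up orthonormalisation on a balanced binary cluster tree gives orthonormality and (2) by construction, the level-wise cluster count gives (1), the $\ell_2$-normalisation plus Cauchy--Schwarz and the cardinality bound $|\tau|\lesssim 2^{J-j}$ give (3), Taylor expansion with the vanishing moments gives (4), and the packing argument on a geometrically balanced box-split tree for quasi-uniform $X$ gives (5). So your proposal is correct and matches the approach the paper defers to.
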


A direct consequence of the orthonormality of the samplet basis is that the
samplet transform
\(
[\sigma_{j,k}]_{j,k}={\bs T}[\delta_{{\bs x}_i}]_{i=1}^{N}
\)
satisfies \({\bs T}^\intercal{\bs T}={\bs T}{\bs T}^\intercal
={\bs I}\in\Rbb^{N\times N}\). If the samplet transform 
\({\bs T}\) is computed recursively adhering the hierarchy induced
by the cluster tree \(\Ccal\), its cost
is of order \(\Ocal(N)\).

Employing the Riesz isometry, the samplet basis induces a basis
for \(\Scal\subset\Ncal_{\Phi}\).
To this end, consider a samplet 
\(
\sigma_{j,k}=\sum_{i=1}^{N} \omega_{j,k,i}\delta_{{\bs x}_{i}}.
\)
The samplet can be identified with the function
\(
\psi_{j,k}\isdef
\sum_{i=1}^{N}\omega_{j,k,i}\Phi({\bs x}_i-\cdot)\in\Scal
\)
by means of the Riesz isometry. The vanishing moment property 
\eqref{eq:vanishingMoments} then translates to
\(
\langle\psi_{j,k},f\rangle_{\Ncal_\Phi}=0
\)
for any \(f\in\Ncal_\Phi\) which satisfies \(f|_{\supp(\sigma_{j,k})}\in\Pcal_q\). 

Furthermore, there holds
\begin{equation} \label{eq:K_sigma}
\big[\big\langle\psi_{j,k},\psi_{j',k'}\big\rangle_{\Ncal_\Phi}\big]_{j,j',k,k'}
= {\bs T}{\bs K}{\bs T}^\intercal\defis {\bs K}^{\Sigma}.
\end{equation}
This means that the Gramian of the embedded samplet basis in \(\Scal\) is
just the samplet transformed generalized Vandermonde matrix. For
\emph{asymptotically smooth} kernels \(K\), i.e.,
there exist $C,r>0$ such that for all $({\bs x},
{\bs y})\in(\Omega\times\Omega)\setminus\Delta$ 

\begin{equation}\label{HM_eg:kernel_estimate}
  \bigg|\frac{\partial^{|\bs\alpha|+|\bs\beta|}}
  	{\partial{\bs x}^{\bs\alpha}
  	\partial{\bs y}^{\bs\beta}} K({\bs x},{\bs y})\bigg|
  		\leq C \frac{(|\bs\alpha|+|\bs\beta|)!}
		{r^{|\bs\alpha|+|\bs\beta|}
		\|{\bs x}-{\bs y}\|_2^{|\bs\alpha|+|\bs\beta|}}
\end{equation}
uniformly in $\bs\alpha,\bs\beta\in\mathbb{N}^d$ apart 
from the diagonal $\Delta\isdef \{({\bs x},{\bs y})\in\Omega
\times\Omega:{\bs x} = {\bs y}\}$, the matrix \( {\bs K}^{\Sigma}\) becomes
quasi-sparse and can efficiently be computed.
More precisely, \textcolor{blue}{a compression based on cluster 
distances is applied to compute a sparsified version of the matrix 
\({\bs K}^{\Sigma}\).} In detail, there holds the following result,
see \cite{harbrecht2022samplets}.

\begin{theorem}\label{thm:compression}
Let \(X\) be quasi-uniform with \(\#X=N\) and
set all coefficients of the generalized Vandermonde matrix ${\bs K}^\Sigma$ from 
\eqref{eq:K_sigma} to zero which satisfy the admissibility condition

\begin{equation}\label{HM_eg:cutoff}
   \dist(\tau,\tau')\ge\rho\max\{\diam(\tau),\diam(\tau')\},\quad\rho>0,
\end{equation}
where \(\tau\) is the cluster supporting \(\sigma_{j,k}\) and \(\tau'\) is the 
cluster supporting \(\sigma_{j',k'}\), respectively. 
\textcolor{blue}{Then, there exists
a constant \(C>0\), such that the resulting compressed matrix 
${\bs K}^{\Sigma,\rho}$ satisfies}
\begin{equation}\label{eq:CompressionError}
  \frac{\big\|{\bs K}^\Sigma-{\bs K}^{\Sigma,\rho}\big\|_F}
  {\big\|{\bs K}^\Sigma\big\|_F} \leq C  m_q \bigg(\frac{r\rho}{d}\bigg)^{-2(q+1)},
\end{equation}
where \(m_q=\binom{q+d}{d}\) is the dimension of \(\Pcal_q\).
The compressed matrix has $\mathcal{O}(m_q^2N\log N)$ nonzero
coefficients.
\end{theorem}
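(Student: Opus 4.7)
The plan is to prove the two parts of the statement---the Frobenius error estimate and the bound on the number of retained entries---essentially independently, following the standard samplet/wavelet compression template.

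The first step is an entry-wise decay estimate on the zeroed coefficients. For an admissible pair of samplets $\sigma_{j,k}$, $\sigma_{j',k'}$ with supporting clusters $\tau$, $\tau'$, I would write the matrix entry as the tensor functional $(\sigma_{j,k}\otimes\sigma_{j',k'})(K)$, and use the vanishing moments of $\sigma_{j,k}$ in the first argument to subtract from $K(\cdot,\bs y)$ its Taylor polynomial of degree $q$ centred at a point of $\tau$. The asymptotic smoothness hypothesis \eqref{HM_eg:kernel_estimate} bounds the Taylor remainder uniformly in $\bs y\in\tau'$ by a multiple of $(\diam\tau)^{q+1}/\bigl(r^{q+1}\dist(\tau,\tau')^{q+1}\bigr)$, with a combinatorial factor $m_q=\binom{q+d}{d}$ enumerating the multi-indices of length $q+1$. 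A symmetric Taylor argument in the second variable---valid because \eqref{HM_eg:kernel_estimate} also controls the mixed derivatives $\partial_{\bs x}^{\bs\alpha}\partial_{\bs y}^{\bs\beta}K$ needed to take Taylor expansions of $\bs y\mapsto\sigma_{j,k}(K(\cdot,\bs y))$---then yields a two-sided entry estimate of the form
\[
\bigl|(\bs K^{\Sigma})_{(j,k),(j',k')}\bigr| \lesssim m_q^2\,r^{-2(q+1)}\,\frac{(\diam\tau)^{q+1}(\diam\tau')^{q+1}}{\dist(\tau,\tau')^{2(q+1)}}\,\|\bs\omega_{j,k}\|_1\|\bs\omega_{j',k'}\|_1.
\]
Plugging in the admissibility condition \eqref{HM_eg:cutoff} turns the geometric factor into $(d/(r\rho))^{2(q+1)}$, and this is the decay that drives the compression.

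The second step is to promote the pointwise estimate to the Frobenius bound. Squaring and summing over admissible cluster pairs, the combined mass of $\|\bs\omega_{j,k}\|_1^2$ on level $j$ is controlled via Theorem~\ref{theo:waveletProperties}~(1) and (3), while Theorem~\ref{theo:waveletProperties}~(5) together with quasi-uniformity furnishes $\diam\tau\lesssim 2^{-j/d}$ so that the geometric factor carries an additional scale-dependent exponent. A careful level-by-level summation then collapses the double sum into a constant multiple of $\|\bs K^\Sigma\|_F^2$ times $m_q^2(r\rho/d)^{-4(q+1)}$. Since the samplet transform $\bs T$ is an isometry, $\|\bs K^\Sigma\|_F=\|\bs K\|_F$, and taking square roots yields \eqref{eq:CompressionError}.

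For the count of retained entries, the argument is purely combinatorial on the cluster tree $\Ccal$. Using $\diam\tau\lesssim 2^{-j/d}$ and quasi-uniformity, a standard packing/volume argument shows that for any fixed cluster $\tau$ on level $j$ the number of clusters $\tau'$ at any given level $j'$ that \emph{violate} \eqref{HM_eg:cutoff} is bounded by a constant depending only on $d$, $\rho$ and $c_{\textnormal{qu}}$. Summing $\mathcal{O}(2^j)$ clusters at level $j$ over $J+1=\mathcal{O}(\log N)$ levels gives $\mathcal{O}(N\log N)$ non-admissible cluster pairs, and since each cluster carries at most $m_q$ samplets, the total number of retained entries is $\mathcal{O}(m_q^2 N\log N)$.

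The main obstacle is the bookkeeping in the two-sided vanishing-moment argument together with the level-by-level summation in the Frobenius estimate: one has to distribute the geometric decay, the $\|\bs\omega\|_1$-factors and the $\dim W_j'$-counts across scales so that the resulting ratio depends only on $m_q$ and $(r\rho/d)^{-2(q+1)}$, independent of $N$. This is precisely where the full strength of \eqref{HM_eg:kernel_estimate}---in particular the $(|\bs\alpha|+|\bs\beta|)!$ growth that cancels exactly against the factorials in the Taylor remainders---is required.
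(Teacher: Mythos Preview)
The paper does not prove this theorem at all: the sentence immediately preceding the statement reads ``there holds the following result, see \cite{harbrecht2022samplets},'' and no argument is given. Your outline is precisely the argument carried out in that reference---the two-sided Taylor/vanishing-moment entry estimate driven by \eqref{HM_eg:kernel_estimate} and Theorem~\ref{theo:waveletProperties}~(3)--(5), followed by a level-wise Frobenius summation and a packing count for the non-admissible blocks---so there is nothing in the present paper to compare against, and your plan matches the cited source.

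One caution on the step you already flag as ``the main obstacle'': after invoking only the admissibility inequality \eqref{HM_eg:cutoff}, the per-entry bound becomes essentially a constant times $\|\bs\omega_{j,k}\|_1\|\bs\omega_{j',k'}\|_1$, and a naive double sum over all admissible pairs then overshoots $\|\bs K^\Sigma\|_F^2$ by logarithmic factors. The proof in \cite{harbrecht2022samplets} avoids this by retaining the actual $\dist(\tau,\tau')$ in the denominator and summing over dyadic distance shells (equivalently, a Schur-type row-sum bound), which is where the convergence in $N$ really comes from. Your sketch is correct in spirit but should make this refinement explicit. Also a small slip: $m_q=\binom{q+d}{d}$ is $\dim\Pcal_q$, i.e., it counts multi-indices with $|\bs\alpha|\le q$, not those of length $q+1$.
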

 
\textcolor{blue}{
We remark that, given a radial function satisfying \eqref{HM_eg:kernel_estimate},
the corresponding scaled kernel \eqref{eq:scaledKernel} satisfies the
asymptotical smoothness estimate, cp.\ \cite{HMQ24},
\[
  \bigg|\frac{\partial^{|\bs\alpha|+|\bs\beta|}}
  	{\partial{\bs x}^{\bs\alpha}
  	\partial{\bs y}^{\bs\beta}} K_\ell({\bs x},{\bs y})\bigg|
  		\leq C\delta_\ell^{-d} \frac{(|\bs\alpha|+|\bs\beta|)!}
		{(r\delta_\ell)^{|\bs\alpha|+|\bs\beta|}
		\|({\bs x}-{\bs y})/\delta_\ell\|_2^{|\bs\alpha|+|\bs\beta|}}=
C\delta_\ell^{-d} \frac{(|\bs\alpha|+|\bs\beta|)!}
		{r^{|\bs\alpha|+|\bs\beta|}
		\|{\bs x}-{\bs y}\|_2^{|\bs\alpha|+|\bs\beta|}},
\]
which means that the dependency of the derivatives on \(\delta_\ell\)
just cancels out. Moreover, we remark that, since we only require derivatives
of the kernel up to order \(2q+1\) to be bounded, the asymptotical smoothness
requirement could in principle also be weakened to kernels of finite
smoothness, see, e.g., \cite{HMSS24} for details.
}
\section{Multiscale algorithm in samplet coordinates}
\label{section:SampletAlgorithm}
In this section, we transform the interpolation problem from the original basis,
referred to as the \emph{kernel basis}, 
to the samplet basis, as introduced in Section \ref{SampletsSection}. 
In what follows, we
denote the samplet transform for \(\Scal'_\ell\) by
\({\bs T}_\ell\in\Rbb^{N_\ell\times N_\ell}\) and recall that 
\({\bs T}^\intercal_\ell{\bs T}_{\ell}={\bs T}_{\ell}{\bs T}^\intercal_{\ell}
={\bs I}\in\Rbb^{N_\ell\times N_\ell}\). Defining the block transform
\(
\Tcal\isdef\diag(
{\bs T}_{1},\ldots,{\bs T}_{L}
)
\)
there holds \(\Tcal^\intercal\Tcal=\Tcal\Tcal^\intercal={\bs I}\).
Hence, the linear system \eqref{eq:InterpolationSystem} is equivalent to
\begin{equation*}
\begin{aligned}
\Tcal^\intercal\Tcal
\left[
\renewcommand{\arraycolsep}{2pt}
\begin{array}{cccc}\bs K_{1,1} & & & \\ 
\bs K_{2,1} & \bs K_{2,2} & & \\ 
\vdots & \vdots & \ddots & \\ 
\bs K_{L,1} &  \bs K_{L,2}& \cdots & \bs K_{L,L}
\end{array}\right]
\Tcal^\intercal\Tcal
\left[\begin{array}{c}\bs{c}_{1} \\ \bs{c}_{2}
\\ \vdots \\ \bs{c}_{L}\end{array}\right]
=\Tcal^\intercal\Tcal\left[\begin{array}{c}\bs{f}_{1} \\ \bs{f}_{2} 
 \\ \vdots \\ \bs{f}_{L}\end{array}\right].
\end{aligned}
\end{equation*}
\textcolor{blue}{
Letting
\(
    \bs K^{\Sigma}_{\ell, \ell '}\isdef {\bs T}_{\ell}
    \bs K_{\ell,\ell '} {\bs T}^\intercal_{\ell'}
    \in \mathbb{R}^{N_\ell \times N_\ell'},\
    \bs c^{\Sigma}_{\ell}\isdef{\bs T}_{\ell} \bs c_{\ell}
    \in \mathbb{R}^{N_\ell},\
    \bs f^{\Sigma}_{\ell}\isdef{\bs T}_{\ell} \bs f_{\ell}
    \in \mathbb{R}^{N_\ell},
\)
we can rewrite the previous system according to
\begin{equation}\label{eq:uncompressedSystem}
\left[\begin{array}{cccc}\bs K^{\Sigma}_{1,1} & & & \\ \bs K^{\Sigma}_{2,1} 
& \bs K^{\Sigma}_{2,2} & &  \\ 
 \vdots & \vdots & \ddots & \\ 
    \bs K^{\Sigma}_{L,1}   & 
    \bs K^{\Sigma}_{L,2} & \cdots& \bs K^{\Sigma}_{L,L}
\end{array}\right]\left[\begin{array}{c}\bs{c}^{\Sigma}_{1} \\ 
\bs{c}^{\Sigma}_{2} \\ \vdots \\ \bs{c}^{\Sigma}_{L}
\end{array}\right]
=
\left[\begin{array}{c}
\bs{f}^{\Sigma}_{1} \\ \bs{f}^{\Sigma}_{2} 
\\ \vdots \\ \bs{f}^{\Sigma}_{L}
\end{array}\right].
\end{equation} 
Since \(\Tcal\) is an isometry,
the condition number of this system is
identical to that of
\eqref{eq:InterpolationSystem}. In particular, the diagonal blocks
remain well-conditioned in accordance with Corollary~\ref{cor:diagCondition}.
Applying the matrix compression from Theorem \ref{thm:compression}
to each of the blocks \(\bs K^{\Sigma}_{\ell, \ell '}\)}, we
arrive at the \textcolor{blue}{compressed,} samplet transformed system

\begin{equation}\label{eq:InterpolationSystemSamplets}
\left[\begin{array}{cccc}\bs K^{\Sigma,\rho}_{1,1} & & & \\ \bs K^{\Sigma,\rho}_{2,1} 
& \bs K^{\Sigma,\rho}_{2,2} & &  \\ 
 \vdots & \vdots & \ddots & \\ 
    \bs K^{\Sigma,\rho}_{L,1}  & 
    \bs K^{\Sigma,\rho}_{L,2}  & \cdots& \bs K^{\Sigma,\rho}_{L,L}
\end{array}\right]\left[\begin{array}{c}\bs{c}^{\Sigma}_{1} \\ 
\bs{c}^{\Sigma}_{2} \\ \vdots \\ \bs{c}^{\Sigma}_{L}
\end{array}\right]
=
\left[\begin{array}{c}
\bs{f}^{\Sigma}_{1} \\ \bs{f}^{\Sigma}_{2} 
\\ \vdots \\ \bs{f}^{\Sigma}_{L}
\end{array}\right].
\end{equation}

\textcolor{blue}{A detailed algorithm for solving \eqref{eq:InterpolationSystemSamplets} using
block forward substitution and evaluating the resulting approximation at a set of points
can be found in Algorithm~\eqref{alg:SimplifiedAlgo}. We remark that replacing the kernel matrix
for the evaluation points by its samplet compressed version leads to a further consistency error,
which can, however, be bounded with similar ideas as in Lemma~\ref{lem:perError}.}

\begin{algorithm}
\caption{Multiscale interpolation in samplet coordinates}
\label{alg:SimplifiedAlgo}
\begin{tabular}{ll}
\textbf{\underline{Input}:} & Sets of points \(\{X_1, \ldots, X_L\}\), function $f$, parameters $\rho,q$.\\[0.2em]
\textbf{\underline{Output}:} & Coefficient vectors \(\widehat{\bs c}_1,\ldots,\widehat{\bs c}_L\) corresponding to 
\(\widehat{w}_1,\ldots,\widehat{w}_L\).
\end{tabular}
\begin{algorithmic}[1]
\vspace*{0.2em}
\FOR{$\ell = 1, 2, \ldots, L$}
    \STATE Compute ${\bs f}^\Sigma_\ell\gets{\bs T}_\ell{\bs f}_\ell\isdef f|X_\ell$.
    \FOR{$\ell' = 1, 2, \ldots, \ell$}
        \STATE Compute $\bs K^{\Sigma,\rho}_{\ell, \ell'}$ using \(q+1\) vanishing moments.
    \ENDFOR
    \STATE Compute $\widehat{\bs e}^{\Sigma}_{\ell}\gets 
    \bs f^{\Sigma}_\ell - \sum_{\ell'=1}^{\ell-1} \bs K^{\Sigma,\rho}_{\ell,\ell'}\widehat{\bs c}^{\Sigma}_{\ell'}$.
    \STATE Solve the local interpolation problem 
    $\bs K^{\Sigma,\rho}_{\ell, \ell}\widehat{\bs c}^{\Sigma}_\ell =\widehat{\bs e}^{\Sigma}_{\ell}$.
    \STATE Compute $\widehat{\bs c}_\ell\gets{\bs T}_\ell^\intercal\widehat{\bs c}^{\Sigma}_\ell$.
\ENDFOR
\end{algorithmic}
\end{algorithm}

\textcolor{blue}{
The overall compression error of \eqref{eq:InterpolationSystemSamplets},
can be estimated by summing up the block-wise errors in accordance with
Theorem~\ref{thm:compression}. The following result is immediate.
\begin{theorem}\label{thm:compErrorKK}
Let \(X_1,\ldots, X_L\) be quasi-uniform. Then, the compression error in
the linear system \eqref{eq:InterpolationSystemSamplets} can be bounded
according to
\[
  \frac{\big\|{\Kcal}^\Sigma-{\Kcal}^{\Sigma,\rho}\big\|_F}
  {\big\|{\Kcal}^\Sigma\big\|_F} \leq C  m_q (r\rho/d)^{-2(q+1)},
\]
where we denote the block matrix from \eqref{eq:uncompressedSystem}
by \({\Kcal}^\Sigma\) and the one from \eqref{eq:InterpolationSystemSamplets}
by \({\Kcal}^{\Sigma,\rho}\), respectively.
\end{theorem}
}

\textcolor{blue}{Observing that \(\Tcal\Dcal^{1/2}=\Dcal^{1/2}\Tcal\), combining the
previous theorem with Theorem~\ref{thm:combinedErrorEstimate} immediately results in an
error estimate for the approximation \(\widehat{s}_L\) to \(s_L\) obtained by solving
the samplet compressed system.}

\textcolor{blue}{
\begin{theorem}Let the conditions of Theorem~\ref{thm:combinedErrorEstimate} be satisfied.
Then, the numerical approximation \(\widehat{s}_L\) resulting from solving 
\eqref{eq:InterpolationSystemSamplets} satisfies the error estimate
\[
\|f-\widehat{s}_L\|_{L^2(\Omega)}\leq C
\Big(( C_1 \mu^\theta 
        + C_1 \eta^{-\theta})^L 
        +\sqrt{L}m_q (r\rho/d)^{-2(q+1)}
        \Big)
        \| f \|_{H^{\theta}(\Omega)}.
        \]
\end{theorem}
}
\textcolor{blue}{Finally, we have the subsequent result on the cost of our multiscale
algorithm, which is in line with a similar result for multiscale interpolation
using hierarchical matrices, see \cite{BW20}.
\begin{theorem}
Let \(X_1,\ldots, X_L\) be quasi-uniform and assume that the corresponding
cardinalities \(\{N_\ell\}_{\ell}\) form a geometric sequence, i.e.,
\(N_1>0\) and \(N_\ell = c_{\mathrm{geo}}^\ell N_1\) for some
constant \(c_{\mathrm{geo}}> 1\).
Then, the overall computational cost for assembling the linear system \eqref{eq:InterpolationSystemSamplets} and the number of non-vanishing matrix
entries are both bounded by \(\mathcal{O}(N_L \log^2 N_L)\), with
the hidden constant depending on the dimension \(m_q\) of \(\Pcal_q\).
\end{theorem}
\begin{proof}
Fix a row index \(\ell\). It has been shown within
\cite[Proof of Theorem 3.16]{AHK14} that the cost for the computation of
\(\mathbf{K}^{\Sigma,\rho}_{\ell,\ell'}\) amounts to
\(\mathcal{O}\left(N_{\ell} \log N_{\ell'}\right)\), which is also the number of
non-vanishing entries of that row.
Moreover, there are \(\ell\) blocks per row.
Hence, the cost for the computation
of the \(\ell\)-th row of \eqref{eq:InterpolationSystemSamplets} is given by
\[
\sum_{\ell'=1}^{\ell} \mathcal{O}\left(N_{\ell} \log N_{\ell'}\right) = \mathcal{O}\bigg(N_{\ell}\sum_{\ell'=1}^{\ell}\log N_{\ell'}\bigg)=
\mathcal{O}\left(N_{\ell} \log^2 N_{\ell}\right),
\]
since \(\ell=\Ocal(\log N_\ell)\) due to the geometric sequence property.
Next, summing up the cost of the individual rows yields the overall cost
\[
\sum_{\ell=1}^L\mathcal{O}\left(N_{\ell} \log^2 N_{\ell}\right)
=\mathcal{O}\bigg(\sum_{\ell=1}^LN_{\ell} \log^2 N_{\ell}\bigg)
=\mathcal{O}\bigg(\log^2 N_L\sum_{\ell=1}^LN_{\ell}\bigg)
=\mathcal{O}\left(N_L\log^2 N_L\right),
\]
again due to the geometric sequence property, i.e., \(\sum_{\ell=1}^LN_L=\Ocal(N_L)\).
This completes the proof.
\end{proof}
} 

To give an idea of the sparsity pattern of the samplet compressed system 
\eqref{eq:InterpolationSystemSamplets}, Figure~\ref{fig:Pattern} 
visualizes the matrix pattern $ \bs K^{\Sigma}_{\ell,\ell '}$
using three levels of points $X_1, X_2, X_3$ arranged on a regular grid for
the unit square, with cardinalities of 16\,641, 66\,049, and 263\,169,
respectively.
Furthermore, we remark that, for a geometric
sequence \(\{N_\ell\}_\ell\),
the cost for applying \(\Tcal\) and \(\Tcal^\intercal\), respectively, remains
linear in \(N_L\). 

\begin{figure}[htb]
    \centering
    \includegraphics[width=0.5\linewidth]{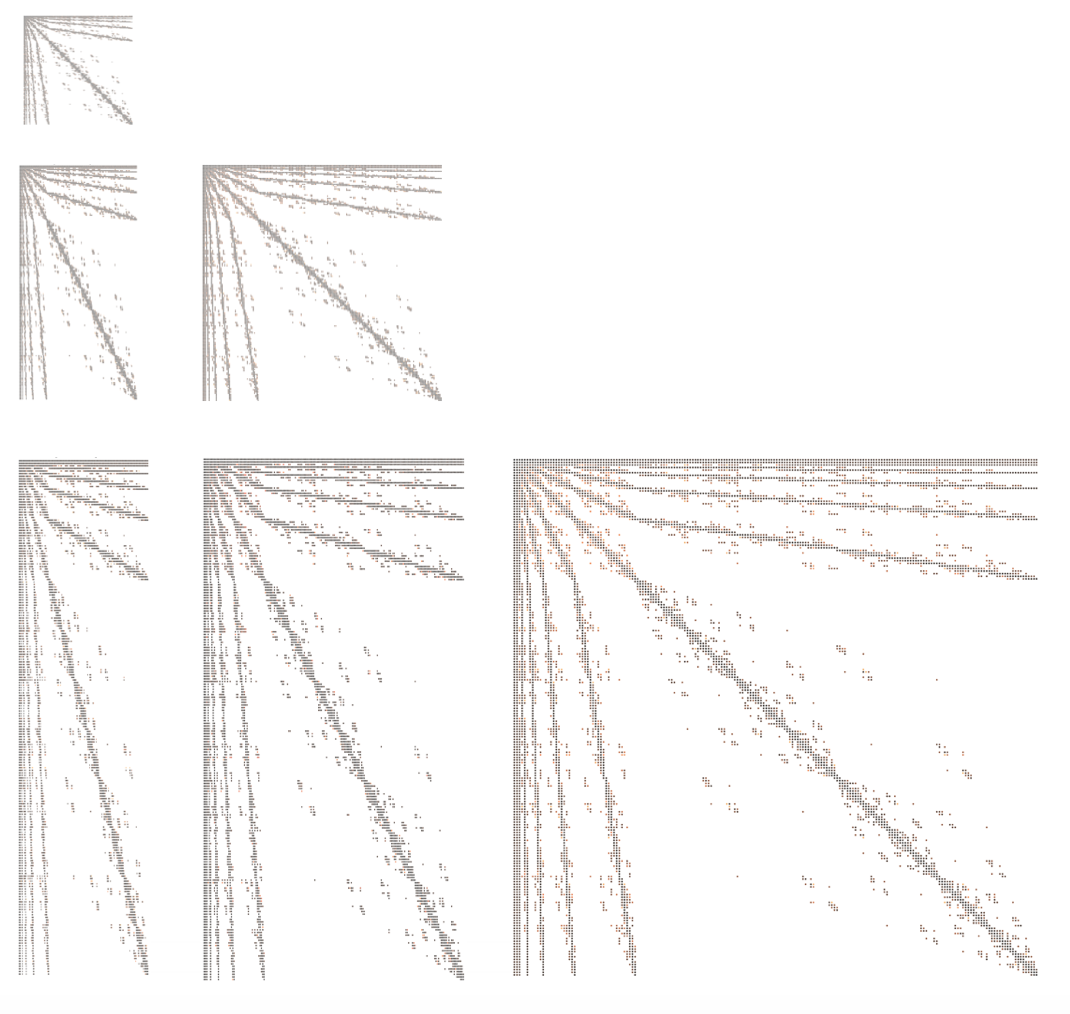}
    \caption{Example of the sparsity patterns of the matrices 
    $\bs K^{\Sigma,\rho}_{\ell, \ell '}$ for $\ell, \ell ' \in \{1,2,3\}$ 
    of the compressed linear system from \eqref{eq:InterpolationSystemSamplets}.}
    \label{fig:Pattern}
\end{figure}

\section{Numerical results}\label{sec:Numerics}
We present numerical examples of the proposed multiscale interpolation in
samplets coordinates in both two and three dimensional settings. 
The parameters for the samplet compression are $\rho$ and $q$.
\textcolor{blue}{The first one is associated with the admissibility condition
\eqref{HM_eg:cutoff}, which determines the matrix blocks that are initially truncated.}
A larger $\rho$ results in a larger distance between clusters before they
are admissible for compression. In our analysis, we adopt $\rho = d$, so that 
the bound in Equation \eqref{eq:CompressionError} is exponentially governed by 
the number of vanishing moments, denoted as $ q + 1$, see \eqref{eq:vanishingMoments}. The higher $q + 1$, the
more polynomial exactness is gained.

\textcolor{blue}{In the experiments, we vary also the parameter \(\gamma\), defined as  
\(
\gamma = \eta\mu,
\)
where \(\delta\) is the kernel rescaling factor, given by \( \delta_i = \eta h_{X_i} \) and previously defined in~\eqref{eq:defRescaledKernel}, while \(\eta\) represents the scaling factor for the fill distance between consecutive levels, i.e., \( h_{X_{\ell+1}} \approx \mu h_{X_{\ell}} \).
}

To measure the approximation quality, we consider 
the relative interpolation errors
\[
\text{error}_{p,{m}} = \frac{\| f -\widehat{s}_m| X_{\text{eval}} \|_{\ell^p}}{\| f| X_{\text{eval}}\|_{\ell^p}}
\]
for $p = 2$ and $p = \infty$ at each of the levels $m = 1, \ldots, L$ for a given set
of evaluation points \(X_{\text{eval}}\). 
Furthermore, we compute the order of convergence, given by
\[
\text{order}_{p,{m}} =\frac{
\log{\left(\displaystyle\frac{\text{error}_{p,{m+1}}}
{{\text{error}_{p,{m}}}}\right)}}
{\log{\left(\displaystyle\frac{h_{m+1}}{h_m}\right)}}.
\]

We investigate three different scenarios in our numerical tests. The first two
are adapted from \cite{wendland2010multiscale}: a smooth function on the unit
square and a non-smooth function on the L-shaped domain. Finally, we extend to 
a three dimensional setting issuing from the Lucy surface model of the 
\texttt{Stanford 3D Scanning Repository}\footnote{%
\url{https://graphics.stanford.edu/data/3Dscanrep/}}.

All computations have been performed at the Centro Svizzero di Calcolo
Scientifico (CSCS) on a single node of the Alps cluster\footnote{%
\url{https://www.cscs.ch/computers/alps}}
 with two AMD EPYC 7742
@2.25GHz CPUs with 470GB of main memory. For the computations, we have
used up to 16 cores. 
The implementation of the samplet compression is available as software package
\texttt{FMCA}\footnote{\url{ https://github.com/muchip/fmca}}.

\subsection{Franke's function in two spatial dimensions}
\label{FrankeFunctionSection}
We interpolate Franke's function, as introduced in
\cite{franke1982scattered}, defined by
\begin{equation}
\begin{aligned}
    f(x, y) = &\frac{3}{4} \exp \left( -\frac{(9x - 2)^2}{4} 
    - \frac{(9y - 2)^2}{4} \right)
              + \frac{3}{4} \exp \left( -\frac{(9x + 1)^2}{49} 
              - \frac{(9y + 1)}{10} \right) \\
              &+ \frac{1}{2} \exp \left( -\frac{(9x - 7)^2}{4} 
              - \frac{(9y - 3)^2}{4} \right) 
              - \frac{1}{5} \exp \left( -(9x - 4)^2 - (9y - 7)^2 \right)
\end{aligned}
\end{equation}
on the unit square $\Omega = [0,1]^2$. To this end, we generate 10 sets of
nested points within this domain, distributing them on a regular grid with
size $h_\ell = 2^{-\ell}$. The number of points per level and the corresponding 
fill-distances are provided in Table~\ref{tab:PointsUS}.

\begin{table}[htb]
    \centering
    \begin{tabular}{rrl}
        \toprule
        $\ell$ & \textbf{$N_\ell$} & \textbf{$h$} \\
        \midrule
        1 & 9 & 0.5 \\
        2 & 25 & 0.25 \\
        3 & 81 & 0.125 \\
        4 & 289 & 0.0625 \\
        5 & 1\,089 & 0.03125 \\
        6 & 4\,225 & 0.015625 \\
        7 & 16\,641 & 0.0078125 \\
        8 & 66\,049 & 0.00390625 \\
        9 & 263\,169 & 0.00195312 \\
        10 & 1\,050\,625 & 0.000976562 \\
        \bottomrule
    \end{tabular}
    \caption{Number of points per level and the corresponding 
fill-distances on $\Omega = [0,1]^2$ for the interpolation of Franke's
function.}
       \label{tab:PointsUS}
\end{table}

The solution is approximated with the ${C}^1$-smooth Mat\'ern-3/2 
RBF and the
${C}^2$-smooth Mat\'ern-5/2 RBF for different values of $\gamma$ and 
using $\rho = 2$, $q=4$. Furthermore, we perform an 
a-posteriori thresholding of small matrix entries with a relative threshold of \(10^{-8}\)
with respect to the Frobenius norm, cp.\ Theorem~\ref{thm:compErrorKK}.
The error is evaluated at a fine grid
with mesh-size $h = 2^{-11}$. The accuracy of the conjugate gradient method
is set to $10^{-6}$. Increasing the accuracy does not result in smaller
errors but increases the number of iterations.
\textcolor{blue}{Interpolation error, order, and compression rate (measured as the percentage of nonzero entries, \%nz), as well as (non-preconditioned) conjugate gradient iterations (CG), are presented in Tables \ref{tab:resFranke1}--\ref{tab:resFranke4}.  
}

\begin{table}[htb]
\centering \begin{tabular}{|c|c|c|c|c|c|c|c|c|} \hline 
\multirow[t]{2}{*}{ } & \multicolumn{8}{|c|} {Mat\'ern-3/2 RBF} \\ 
\hline \multirow[t]{2}{*}{ } & \multicolumn{4}{|l|}{$\gamma=0.25$} 
& \multicolumn{4}{|l|}{$\gamma=0.5$} \\ \hline $\ell$ 
& $\text{error}_2$ & $\text{order}_2$ & \%nz & CG & $\text{error}_2$ & 
$\text{order}_2$ & \%nz & CG\\ 
\hline 1 & $ 5.67 \cdot 10^{-1}$ & -- & 55 & 5 
& $ 4.84 \cdot 10^{-1}$ & -- & 55 & 6 \\ 

2 & $ 1.38 \cdot 10^{-1}$ & 2.04 & 40 & 8 
& $ 5.29 \cdot 10^{-2}$ & 2.19 & 40 & 13 \\ 

3 & $ 3.16 \cdot 10^{-2}$ & 2.12 & 43 & 9 & 
$ 9.20 \cdot 10^{-3}$ & 2.53 & 43 & 25 \\ 

4 & $ 6.76 \cdot 10^{-3}$ & 2.23 & 48 & 10 
& $ 1.15 \cdot 10^{-3}$ & 3.00 & 48 & 34\\ 

5 & $ 1.48 \cdot 10^{-3}$ & 2.19 & 22 & 10 
& $ 2.51 \cdot 10^{-4}$ & 2.53 & 28 & 39 \\ 

6 & $ 3.45 \cdot 10^{-4}$ & 2.10 & 7.8 & 10 
& $6.08 \cdot 10^{-5}$ & 2.05 & 10 & 39 \\ 

7 & $ 8.30 \cdot 10^{-5} $ & 2.06 & 2.3 & 10 
& $1.30 \cdot 10^{-5}$ & 2.22 & 3.2  & 39 \\ 

8 & $ 1.46 \cdot 10^{-5}$ & 2.51 & 0.64 & 10 
& $1.14 \cdot 10^{-6}$ & 3.51 & 0.89 & 39 \\ 

9 & $ 3.13 \cdot 10^{-6}$ & 2.22 & 0.17 & 10 
& $ 9.98 \cdot 10^{-8}$ & 3.52 & 0.23 & 39 \\ 

10 & $ 7.85 \cdot 10^{-7}$ & 2.00 & 0.04 & 10 
& $ 3.47 \cdot 10^{-8}$ & 1.52 & 0.06 & 39 \\ 
\hline \end{tabular} \caption{Results for the interpolation of Franke's
function using the ${C}^1$-smooth Mat\'ern-3/2 RBF and $\rho = 2$, $q= 4$
 as samplet compression parameters.} \label{tab:resFranke1} 
\end{table}

\begin{table}[htb]
\centering
\begin{tabular}{|c|c|c|c|c|c|c|c|c|}
\hline
\multirow[t]{2}{*}{ } & \multicolumn{8}{|c|} {Mat\'ern-3/2 RBF} \\
\hline 
\multirow[t]{2}{*}{ } & \multicolumn{4}{|l|}{$\gamma=0.75$ } 
                      & \multicolumn{4}{|l|}{$\gamma=1$ } \\
\hline 
$\ell$ & $\text{error}_2$ & $\text{order}_2$ & \%nz &  CG & 
$\text{error}_2$ & $\text{order}_2$ & \%nz &  CG\\
\hline 
1 & $ 4.84 \cdot 10^{-1}$ & -- & 55 & 6 
  & $ 4.88 \cdot 10^{-1}$ & -- & 55 & 6 \\

2 & $ 4.67 \cdot 10^{-2}$ & 3.37 & 40 & 17 
  & $ 4.51 \cdot 10^{-2}$ & 3.43 & 40 & 20 \\

3 & $ 7.97 \cdot 10^{-3}$ & 2.55 & 43 & 40 
  & $ 7.72 \cdot 10^{-3}$ & 2.55 & 43 & 48 \\

4 & $ 5.70 \cdot 10^{-4}$ & 3.81 & 48 & 63
  & $ 3.60 \cdot 10^{-4}$ & 4.43 & 48 & 96\\

5 & $ 1.09 \cdot 10^{-4}$ & 2.38 & 35 & 90 
  & $5.75 \cdot 10^{-5}$ & 3.31 & 37 & 155 \\

6 & $ 2.38 \cdot 10^{-5}$ & 2.19 & 14 & 96 
  & $1.16 \cdot 10^{-5}$ & 2.31 & 15 & 186 \\

7 & $ 4.57 \cdot 10^{-6} $ & 2.38 & 4.4 & 98 
  & $2.06 \cdot 10^{-6}$ & 2.49 & 5.0 & 195 \\

8 & $ 4.74 \cdot 10^{-7}$ & 3.26 & 1.2 & 99 
  & $ 2.16 \cdot 10^{-7}$ & 3.25 & 1.4 & 196 \\

9 & $ 3.80 \cdot 10^{-8}$ & 3.64 & 0.32 & 98 
  & $ 3.05 \cdot 10^{-8}$ & 2.83 & 0.38 & 197 \\

10 & $1.33 \cdot 10^{-8} $ & 1.51 & 0.08 & 98 
   & $ 1.41 \cdot 10^{-8}$ & 1.11 & 0.09 & 200 \\
\hline
\end{tabular}
    \caption{Results for the interpolation of Franke's
function using the ${C}^1$-smooth Mat\'ern-3/2 RBF and 
    $\rho = 2$, $q= 4$ as samplet compression parameters.}
    \label{tab:resFranke2}
\end{table}

\begin{table}[htb]
\centering
\begin{tabular}{|c|c|c|c|c|c|c|c|c|}
\hline
\multirow[t]{2}{*}{ } & \multicolumn{8}{|c|} {Mat\'ern-5/2 RBF} \\
\hline
\multirow[t]{2}{*}{ } & \multicolumn{4}{|l|}{$\gamma=0.25$ } 
                      & \multicolumn{4}{|l|}{$\gamma=0.5$ } \\
\hline 
$\ell$ & $\text{error}_2$ & $\text{order}_2$ & \%nz &  CG & 
$\text{error}_2$ & $\text{order}_2$ & \%nz &  CG\\
\hline 
1 & $ 5.45 \cdot 10^{-1}$ & -- & 55 & 5 
  & $ 4.72 \cdot 10^{-1}$ & -- & 55 & 6 \\

2 & $ 1.12 \cdot 10^{-1}$ & 2.28 & 40 & 8 
  & $ 4.51 \cdot 10^{-2}$ & 1.39 & 40 & 16 \\
 
3 & $ 2.27 \cdot 10^{-2}$ & 2.14 & 43 & 9 
  & $ 7.77 \cdot 10^{-3}$ & 2.54 & 43 & 32 \\
 
4 & $ 4.54 \cdot 10^{-3}$ & 2.32 & 48 & 10 
  & $ 6.92 \cdot 10^{-4}$ & 3.48 & 48 & 42 \\
 
5 & $ 1.00 \cdot 10^{-3}$ & 2.18 & 22 & 10 
  & $ 1.49 \cdot 10^{-4}$ & 2.21 & 25 & 48 \\

6 & $ 2.49 \cdot 10^{-4}$ & 2.01 & 7.8 & 10 
  & $3.42 \cdot 10^{-5}$ & 2.12 & 8.8 & 48 \\
 
7 & $ 6.13 \cdot 10^{-5} $ & 2.02 & 2.3 & 10 
  & $6.88 \cdot 10^{-6}$ & 2.31 & 2.7 & 48 \\

8 & $ 6.06 \cdot 10^{-6}$ & 3.34 & 0.64 & 9 
  & $8.31 \cdot 10^{-7}$ & 3.05 & 0.74 & 48 \\

9 & $ 1.15 \cdot 10^{-6}$ & 2.40 & 0.17 & 9 
  & $ 5.68 \cdot 10^{-8}$ & 3.87 & 0.19 & 48 \\

10 & $ 3.07 \cdot 10^{-7}$ & 1.91 & 0.04 & 9 
   & $ 6.86 \cdot 10^{-9}$ & 3.05 & 0.05 & 48 \\
\hline
\end{tabular}
    \caption{Results for the interpolation of Franke's
function using the ${C}^2$-smooth Mat\'ern-5/2 RBF and 
$\rho = 2$, $q=4$ as samplet compression parameters.}
    \label{tab:resFranke3}
\end{table}

\begin{table}[htb]
\centering
\begin{tabular}{|c|c|c|c|c|c|c|c|c|}
\hline
\multirow[t]{2}{*}{ } & \multicolumn{8}{|c|} {Mat\'ern-5/2 RBF} \\
\hline 
\multirow[t]{2}{*}{ } & \multicolumn{4}{|l|}{$\gamma=0.75$ } 
                      & \multicolumn{4}{|l|}{$\gamma=1$ } \\
\hline 
$\ell$ & $\text{error}_2$ & $\text{order}_2$ & \%nz &  CG 
       & $\text{error}_2$ & $\text{order}_2$ & \%nz &  CG\\
\hline 
1 & $ 4.79 \cdot 10^{-1}$ & -- & 55 & 6 
  & $ 9.16 \cdot 10^{-1}$ & -- & 55 & 7 \\

2 & $ 4.41 \cdot 10^{-2}$ & 3.44 & 40 & 20 
  & $ 4.69 \cdot 10^{-2}$ & 4.29 & 40 & 25 \\
 
3 & $ 7.13 \cdot 10^{-3}$ & 2.63 & 43 & 58 
  & $ 7.47 \cdot 10^{-3}$ & 2.65 & 43 & 83 \\
 
4 & $ 3.29 \cdot 10^{-4}$ & 4.43 & 48 & 96 
  & $ 1.63 \cdot 10^{-4}$ & 2.19 & 48 & 171\\
 
5 & $ 6.14 \cdot 10^{-5}$ & 2.42 & 35 & 139 
  & $3.96 \cdot 10^{-5}$ & 2.04 & 37 & 301 \\
 
6 & $ 1.22 \cdot 10^{-5}$ & 2.33 & 13 & 155
  & $1.34 \cdot 10^{-5}$ & 1.56 & 15 & 369 \\
 
7 & $ 2.11 \cdot 10^{-6} $ & 2.53 & 4.2 & 157 
  & $2.06 \cdot 10^{-6}$ & 2.70 & 4.8 & 403 \\
 
8 & $ 2.57 \cdot 10^{-7}$ & 3.03 & 1.2 & 158 
  & $ 2.39 \cdot 10^{-7}$ & 3.10 & 1.4 & 407 \\

9 & $ 4.06 \cdot 10^{-8}$ & 2.66 & 0.31 & 159 
  & $ 7.56 \cdot 10^{-8}$ & 1.66 & 0.36 & 436 \\

10 & $ 3.04 \cdot 10^{-8} $ & 0.42 & 0.08 & 162 
   & $ 7.01 \cdot 10^{-8}$ & 0.11 & 0.09 & 430 \\
\hline
\end{tabular}
    \caption{Results for the interpolation of Franke's
function using the ${C}^2$-smooth Mat\'ern-5/2 RBF and 
$\rho = 2$, $q=4$ as samplet compression parameters.}
    \label{tab:resFranke4}
\end{table}

The tables clearly demonstrate the expected exponential decay of the
approximation error with increasing level for both error measures and
both RBFs under consideration.
Furthermore, we see the effect of a larger value of $\gamma$, which corresponds
to a larger lengthscale parameter of the RBF, resulting in a larger condition number. For increasing $\gamma$, we require successively more
iterations for the conjugate gradient method to achieve the prescribed accuracy, with up to 430 iterations for \(\gamma=1\) and the Mat\'ern-5/2 RBF. 

\textcolor{blue}{To validate the theoretical bound on the computational cost of assembling the 
linear system stated in Theorem~\ref{thrm:convergenceResult}, Figure~\ref{fig:PlotFrankeTimes} shows the 
single threaded computation times obtained with $\gamma = 1$ and the Mat\'ern-3/2 RBF. 
The dashed lines correspond to the rates $N \log^{\alpha} N$ 
for $\alpha = 1,2,3$. The results clearly confirm that the overall time scales as $N \log^2 N$, 
in agreement with the theoretical analysis.
}
\begin{figure}[htb]
	\begin{center}
		\begin{tikzpicture}[scale=1]
			\begin{loglogaxis}[
				grid = both,
				xlabel = {Number of points $N$}, 
				ylabel = {Time (s)}, 
				y label style={rotate=90},
				legend style = {
					font =\scriptsize,
					legend cell align=left,
					mark options={scale=1},
					/tikz/every even column/.append style={column sep=1cm}
				},
				xmax = 1e7,
				x label style= {rotate=0},
				every axis x label/.append style={at={(rel axis cs: 0.5, -0.15)}, below},
				every axis y label/.append style={at={(rel axis cs: -0.15, 0.5)}, rotate=90, below}, 
				legend style={font=\footnotesize, at={(0.97,0.4)}, anchor=north east}, 
				height=6.85cm, width=9cm 
				]
				\addplot[color={rgb,255:red,0; green,87; blue,231},mark=asterisk,mark size=2pt, line width=1pt] table[x index=0, y index=1, mark=square*, col sep=space] {timings_data.txt};
				\addlegendentry{Data}
				\addplot[color={rgb,255:red,255; green,167; blue,0},dashed, line width=1.2pt]  table[x index=0, y index=2, mark=star, col sep=space] {timings_data.txt};
                \addlegendentry{$ c N \log N$}
                \addplot[color={rgb,255:red,214; green,45; blue,32},dashed, line width=1.2pt]  table[x index=0, y index=3, mark=star, col sep=space] {timings_data.txt};
                \addlegendentry{$ c N \log^2N$}
				\addplot[color={rgb,255:red,0; green,135; blue,68},dashed, line width=1.2pt]  table[x index=0, y index=4, mark=star, col sep=space] {timings_data.txt};
				\addlegendentry{$ c N \log^3N$}
			\end{loglogaxis}
		\end{tikzpicture}
		\caption{\label{fig:PlotFrankeTimes}
			Comparison of the overall assembly time (blue) with theoretical growth rates $N \log^{\alpha} N$ for $\alpha = 1,2,3$.}
	\end{center}
\end{figure}
Finally, Figure~\ref{fig:PlotFranke} shows a visualization of the solution (left) and
the residuals (right) at 
levels \(\ell=1, 4, 7,10\) for $\gamma = 0.5$ using the Mat\'ern-3/2 RBF.

\begin{figure}[htb]
  \centering
  \begin{tikzpicture}
      \draw(0,0) node {
      \includegraphics[width=0.39\textwidth]{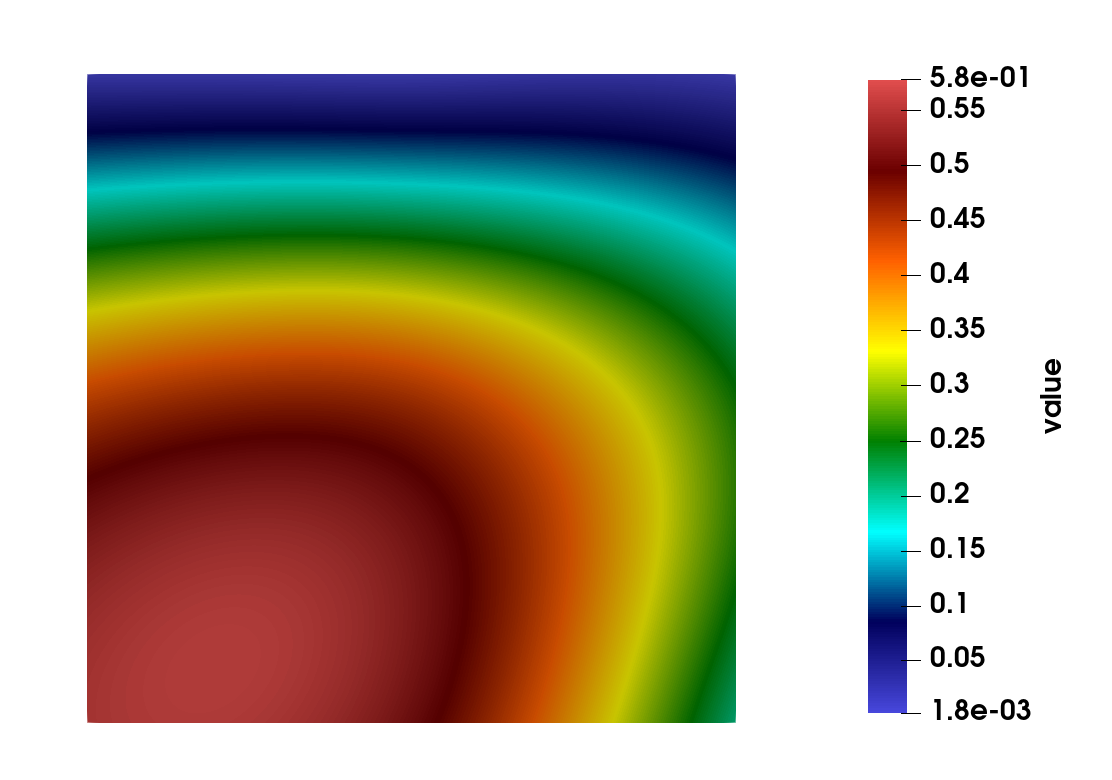}};
      \draw(6,0) node {
          \includegraphics[width=0.39\textwidth]{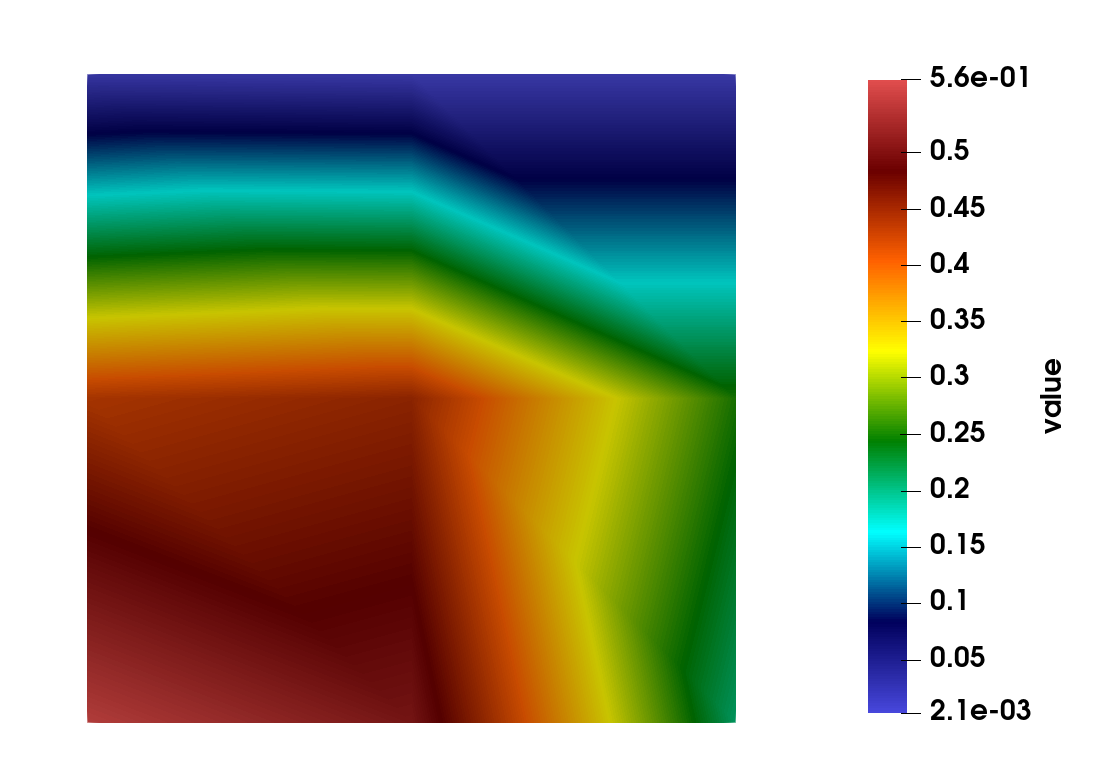}};
            \draw(3,2) node {$\ell=1$};
      \draw(0,-4.5) node {
          \includegraphics[width=0.39\textwidth]{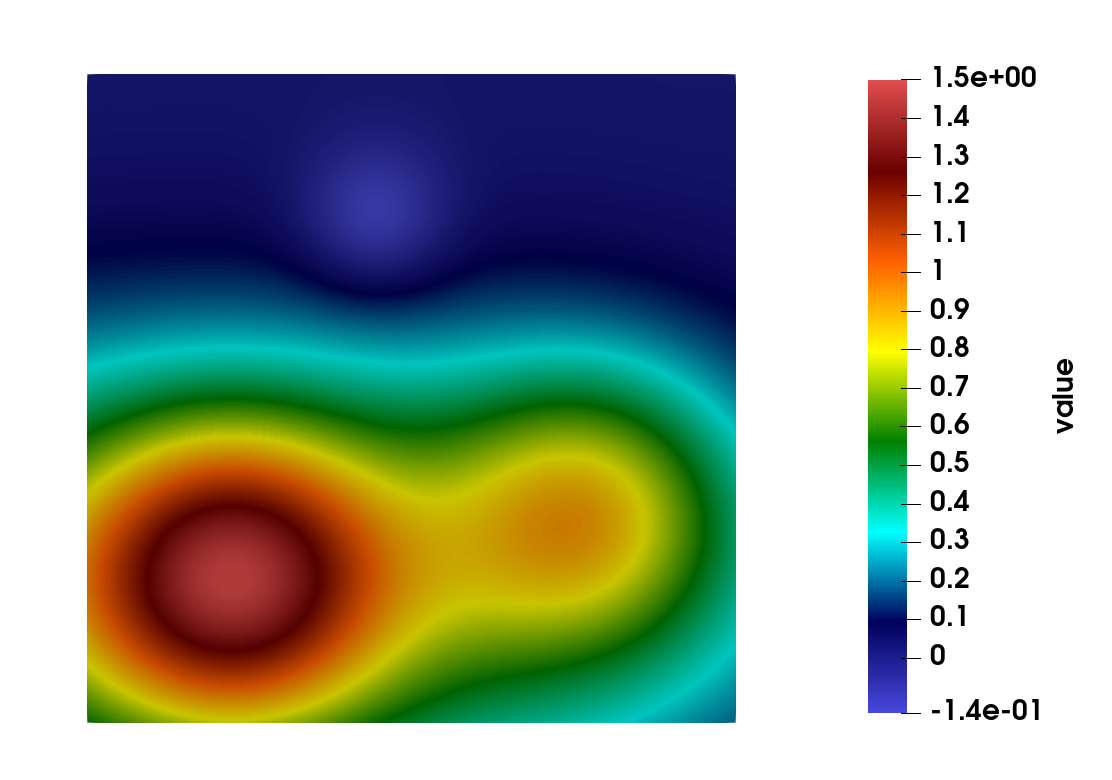}};
        \draw(6,-4.5) node {
        \includegraphics[width=0.39\textwidth]{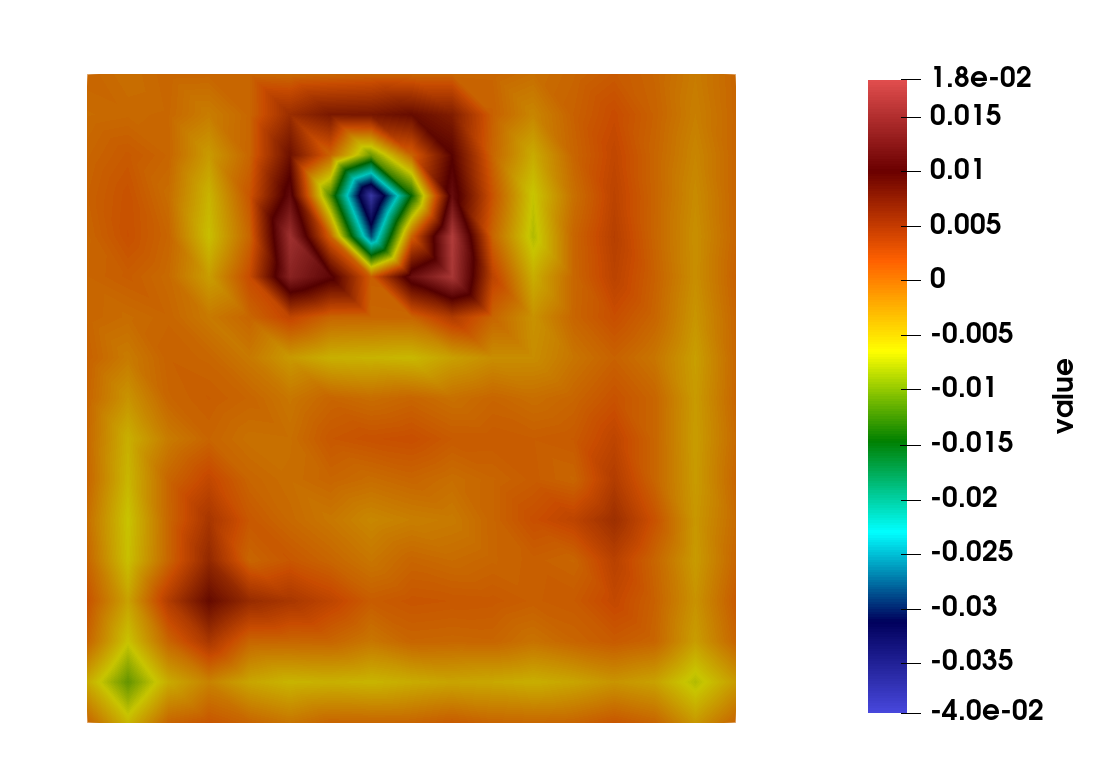}};
          \draw(3,-2.5) node {$\ell=4$};
          \draw(0,-9) node {
      \includegraphics[width=0.39\textwidth]{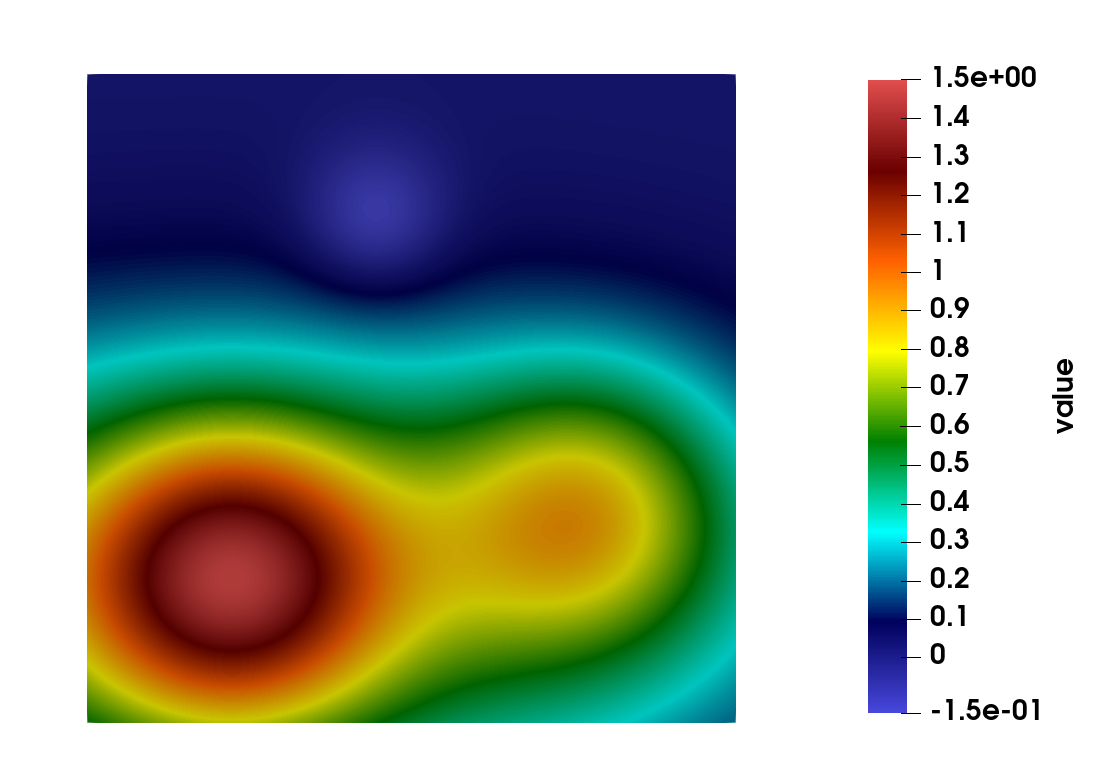}};
          \draw(6,-9) node {
      \includegraphics[width=0.39\textwidth]{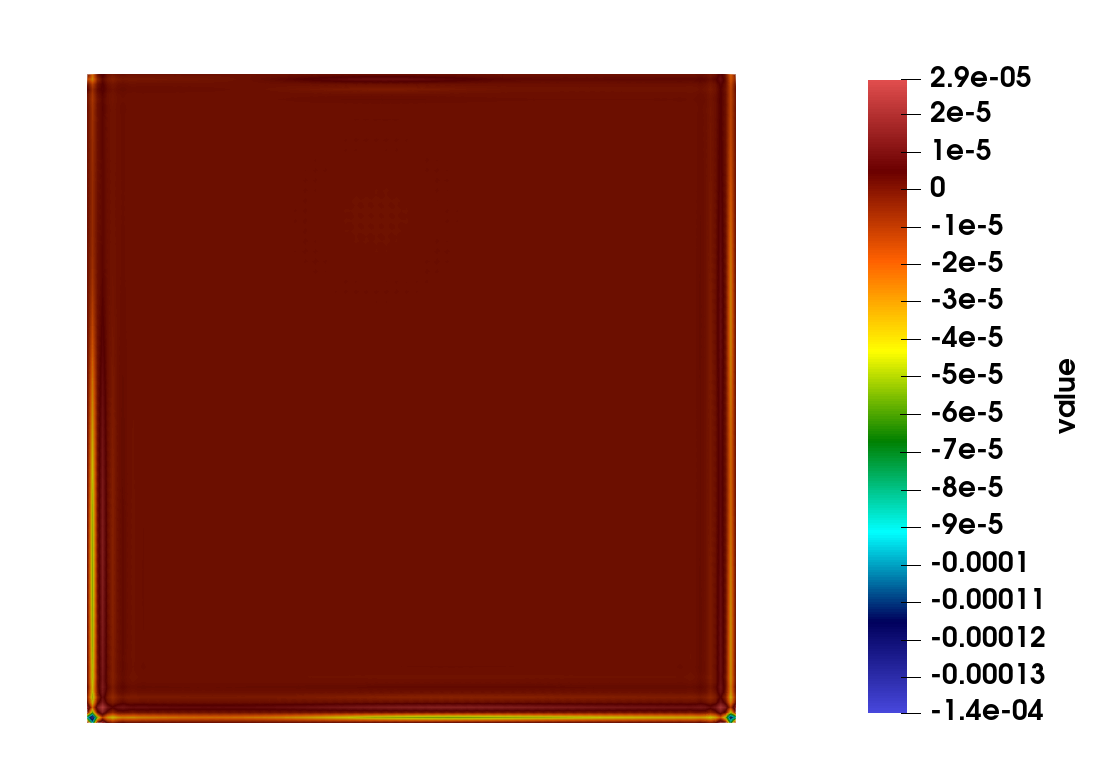}};   
    \draw(3,-7) node {$\ell=7$};
          \draw(0,-13.5) node {
      \includegraphics[width=0.39\textwidth]{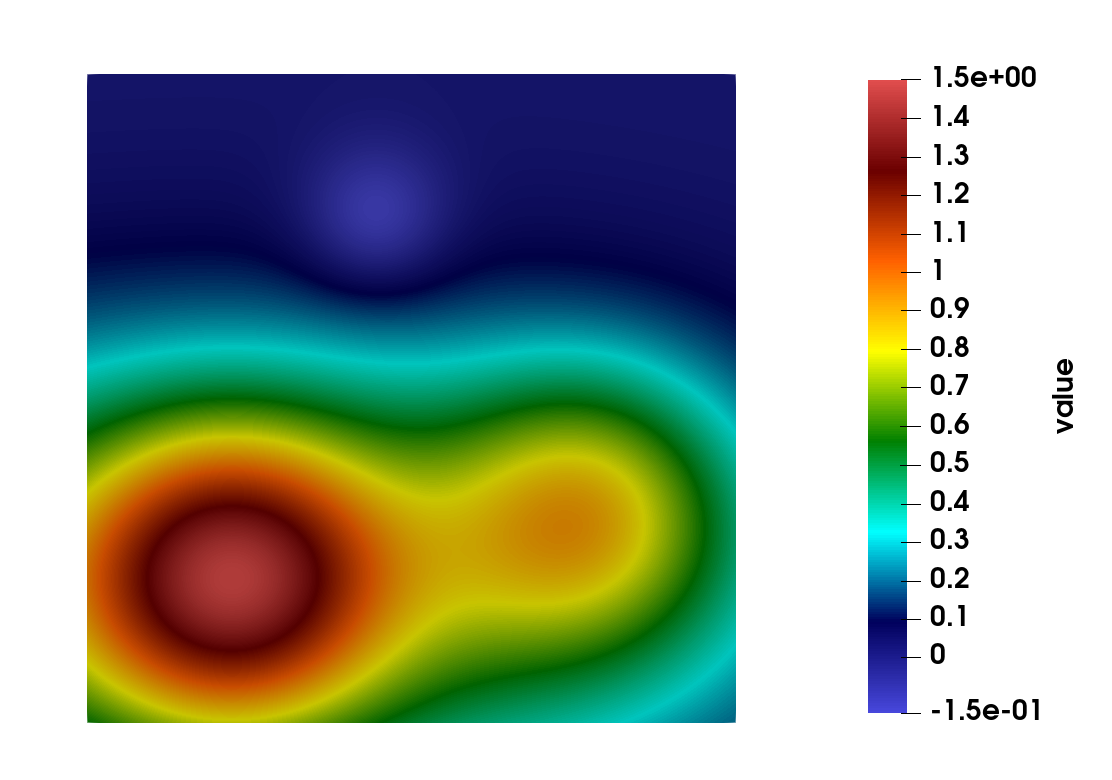}};
           \draw(6,-13.5) node {
     \includegraphics[width=0.39\textwidth]{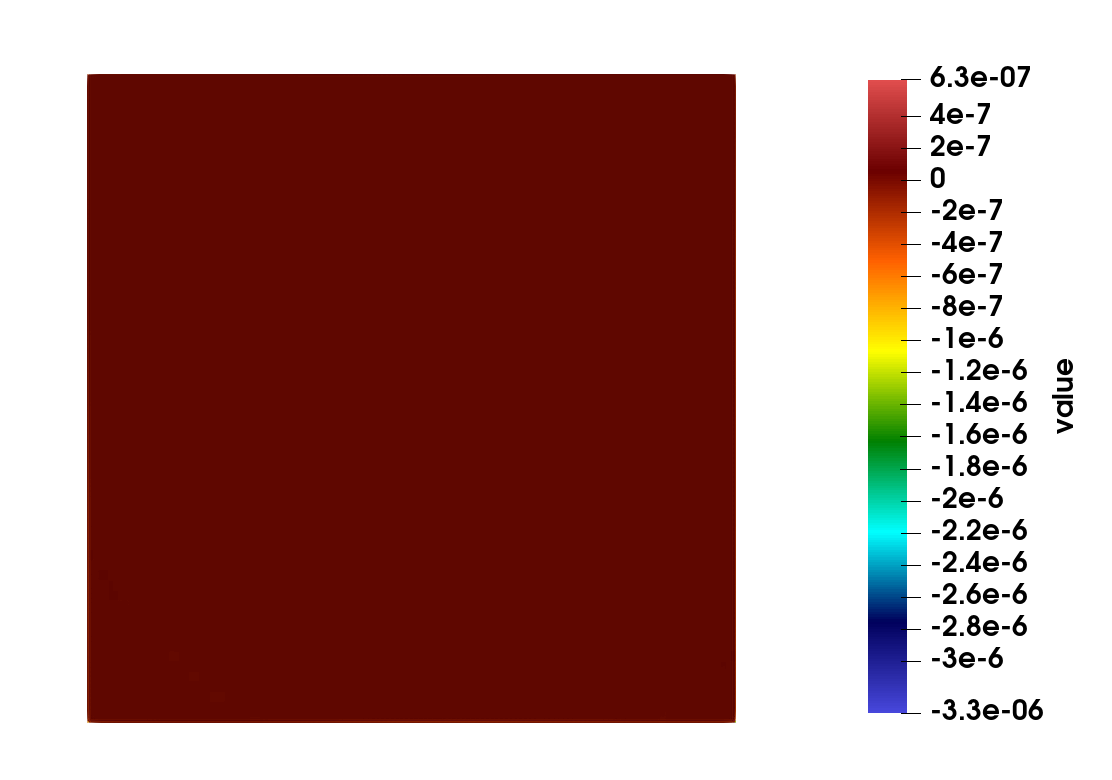}};
    \draw(3,-11.5) node {$\ell=10$};
    \end{tikzpicture}
     \caption{Solution (left) and corresponding residuals (right) for 
      the interpolation of Franke's
function for levels \(\ell=1, 4, 7, 10\).}
    \label{fig:PlotFranke}
\end{figure}

\subsection{Non-smooth function} \label{NonSmoothSection}
We consider the L-shaped domain $\Omega = \left[ -\frac{1}{2}, 
\frac{1}{2} \right]^2 \setminus \left( 0, \frac{1}{2} \right]^2$
where we introduce polar coordinates $x = r \cos \phi$, 
$y = r \sin \phi $, with $r \geq 0$ and $\phi \in [\pi/2, 2\pi]$.
In this test, we aim at interpolating the function
\(
u(r, \phi) = -r ^{\frac{2}{3}} \sin \left( \frac{2\phi - \pi}{3} \right),
\)
which solves the Laplace equation $\Delta u = 0$. We select $h_\ell = 2^{-\ell}$ as
before. Given the reduced smoothness of the function, we choose the 
${C}^0$-smooth Mat\'ern-1/2 RBF for its approximation. \textcolor{blue}{The error is evaluated on a fine grid with mesh size \( h = 2^{-9} \).
The parameters for the samplet matrix compression and
the accuracy of the conjugate gradient method are the same as in the previous example.
}
Table \ref{tab:PointsLShape} provides the number of points per level along with the corresponding fill distances. The results for $\gamma = 0.5$ and $\gamma = 1$ are summarized in Table \ref{tab:Lshape1}.

\begin{table}[htb]
    \centering
    \begin{tabular}{rrl}
        \toprule
        $\ell$ & \textbf{$N_\ell$} & \textbf{$h$} \\
        \midrule
        1 & 21 & 0.25 \\
        2 & 65 & 0.125 \\
        3 & 225 & 0.0625 \\
        4 & 833 & 0.03125 \\
        5 & 3\,201 & 0.015625 \\
        6 & 12\,545 & 0.0078125 \\
        7 & 49\,665 & 0.00390625 \\
        8 & 197\,633 & 0.00195312 \\
        \bottomrule
    \end{tabular}
    \caption{Number of points per level and the corresponding 
      fill-distances on the L-shaped domain for the interpolation of
      the solution to the Laplace equation.}
       \label{tab:PointsLShape}
\end{table}

\begin{table}[htb]
\centering
\begin{tabular}{|c|c|c|c|c|c|c|}
\hline 
\multirow[t]{2}{*}{ } & \multicolumn{6}{c|}{$\gamma=0.5$} \\
\hline 
$\ell$ &  $\text{error}_2$ & $\text{error}_\infty$ & 
$\text{order}_{2}$ & $\text{order}_{\infty}$ & \%nz & CG \\
\hline 
1 & $ 4.37 \cdot 10^{-2}$ & $ 5.98 \cdot 10^{-2}$ & -- &  --& 52 & 8 \\

2 &  $ 1.38 \cdot 10^{-2}$ & $ 3.17 \cdot 10^{-2}$ & 1.66 & 0.92 & 49 & 15\\

3 &   $ 5.33 \cdot 10^{-3}$ & $ 2.04 \cdot 10^{-2}$ & 1.37 & 0.64 & 38 & 22\\

4 &  $ 1.83 \cdot 10^{-3}$ & $ 1.29 \cdot 10^{-2}$ & 1.54 & 0.66 & 18 & 25 \\

5 &  $ 5.88 \cdot 10^{-4}$ & $ 8.15 \cdot 10^{-3}$ & 1.64 & 0.66 & 6.2 & 27 \\

6 & $ 1.85 \cdot 10^{-4}$ & $ 5.06 \cdot 10^{-3}$ & 1.67 & 0.69 & 1.8 & 28 \\

7 & $ 5.84 \cdot 10^{-5}$ & $ 3.17 \cdot 10^{-3}$ & 1.66 & 0.67 & 0.51 & 28 \\

8 & $ 1.84 \cdot 10^{-5}$ & $ 1.59 \cdot 10^{-3}$ & 1.66 & 1.00 & 0.13 & 28 \\
\hline
\end{tabular}

\vspace{0.3 cm}

\centering
\begin{tabular}{|c|c|c|c|c|c|c|}
\hline 
\multirow[t]{2}{*}{ } & \multicolumn{6}{c|}{$\gamma=1$} \\
\hline 
$\ell$ &  $\text{error}_2$ & $\text{error}_\infty$ & 
$\text{order}_{2}$ & $\text{order}_{\infty}$ & \%nz & CG \\
\hline 
1 & $ 3.39 \cdot 10^{-2}$ & $ 4.81 \cdot 10^{-2}$ & -- &  -- & 52 & 9 \\

2 & $ 1.27 \cdot 10^{-2}$ & $ 3.12 \cdot 10^{-2}$ & 1.42 & 0.62 & 51 & 21 \\

3 & $ 4.15 \cdot 10^{-3}$ & $ 1.98 \cdot 10^{-2}$ & 1.61 & 0.66 & 48 & 37 \\

4 & $ 1.29 \cdot 10^{-3}$ & $ 1.25 \cdot 10^{-2}$ & 1.69 & 0.66 & 39 & 54 \\

5 & $ 3.97 \cdot 10^{-4}$ & $ 7.90 \cdot 10^{-3}$ & 1.70 & 0.66 & 19 & 69 \\

6 & $ 1.22 \cdot 10^{-4}$ & $ 4.90 \cdot 10^{-3}$ & 1.70 & 0.69 & 6.7 & 77 \\

7 & $ 3.79 \cdot 10^{-5}$ & $ 3.04 \cdot 10^{-3}$ & 1.69 & 0.69 & 1.9 & 79 \\

8 & $ 1.13 \cdot 10^{-5}$ & $ 1.15 \cdot 10^{-3}$ & 1.75 & 1.40 & 0.44 & 79 \\
\hline
\end{tabular}
    \caption{Results for the interpolation of the solution of the Laplace
    equation on the L-shaped domain using the ${C}^0$-smooth Mat\'ern-1/2 RBF
    and $\rho = 2$, $q= 4$ as samplet compression 
  parameters.}
    \label{tab:Lshape1}
\end{table}

The compression rates are similar to the previous example, also in case of
the less smooth Mat\'ern-1/2 RBF. The approximation error and the corresponding
order are reduced, however. The number of conjugate gradient iterations
stays rather low with a maximum of 28 iterations for $\gamma=0.5$ and of 79 
iterations for $\gamma=1$. This can be explained by
the slower decay of the Mat\'ern-1/2 kernel's eigenvalues, resulting in less
ill-conditioned generalized Vandermonde matrices.

Figure \ref{fig:PlotLshape} illustrates the solution (left)
and the corresponding residuals (right) at levels \(\ell=2, 4, 6, 8\) with
$\gamma = 0.5$.

\begin{figure}[htb]
  \centering
  \begin{tikzpicture}
    \draw(0,0) node {
      \includegraphics[width=0.4\textwidth]{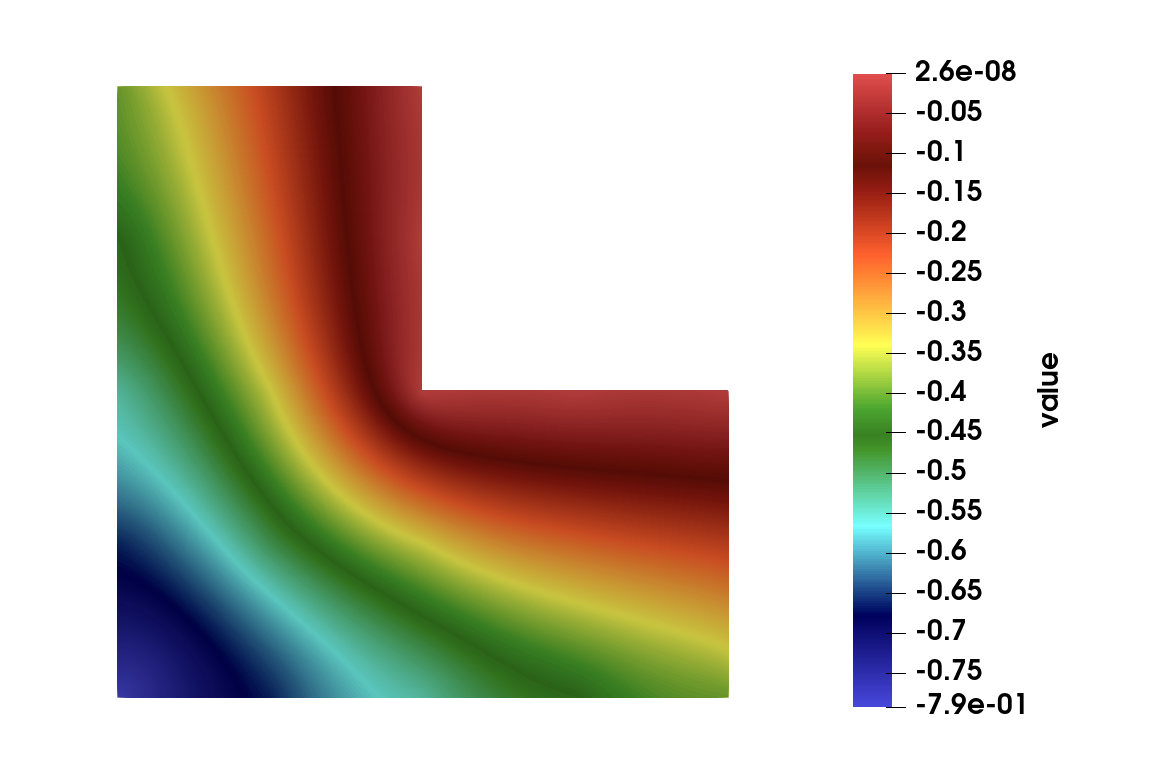}};
      \draw(6,0) node {
          \includegraphics[width=0.4\textwidth]{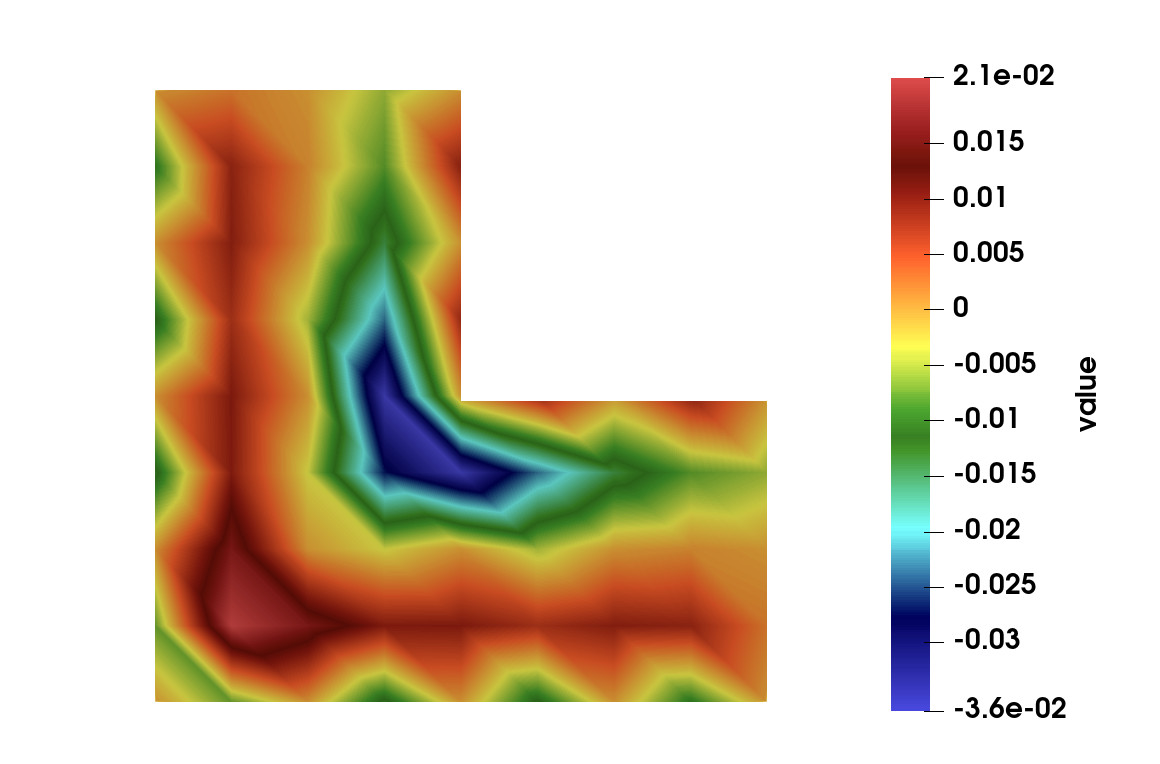}};
            \draw(3,2) node {$\ell=2$};
      \draw(0,-4.5) node {
          \includegraphics[width=0.4\textwidth]{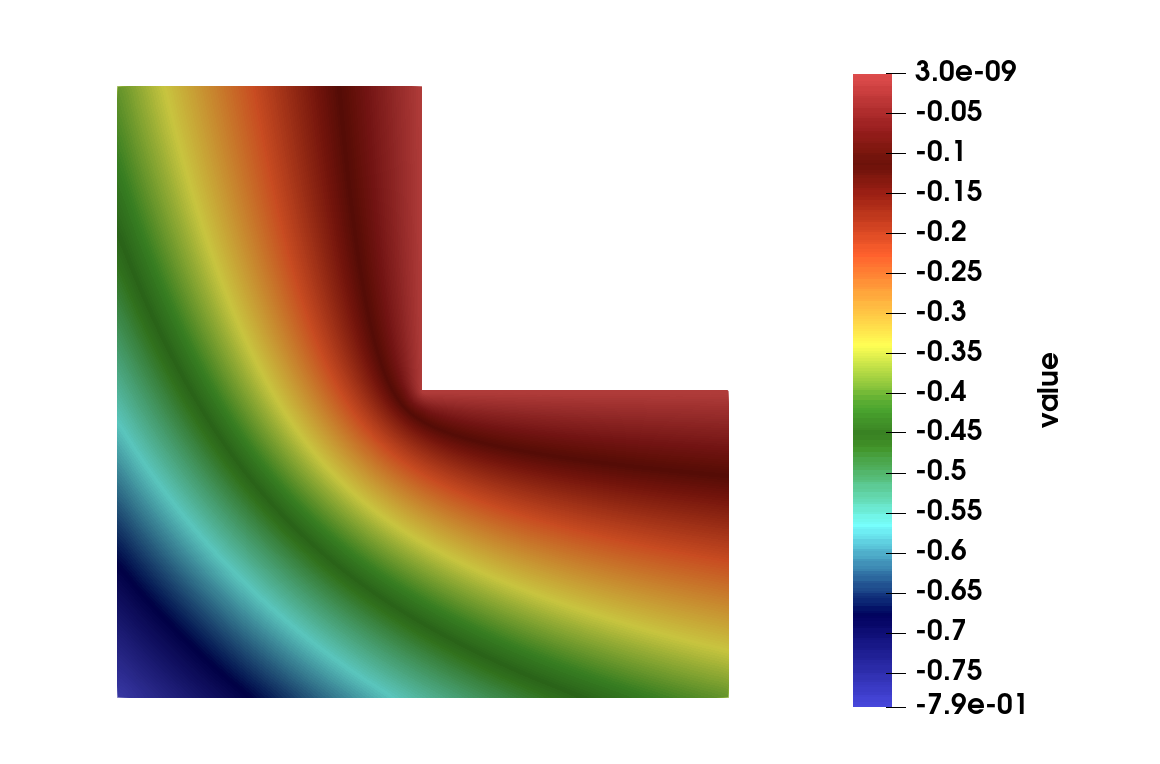}};
        \draw(6,-4.5) node {
        \includegraphics[width=0.4\textwidth]{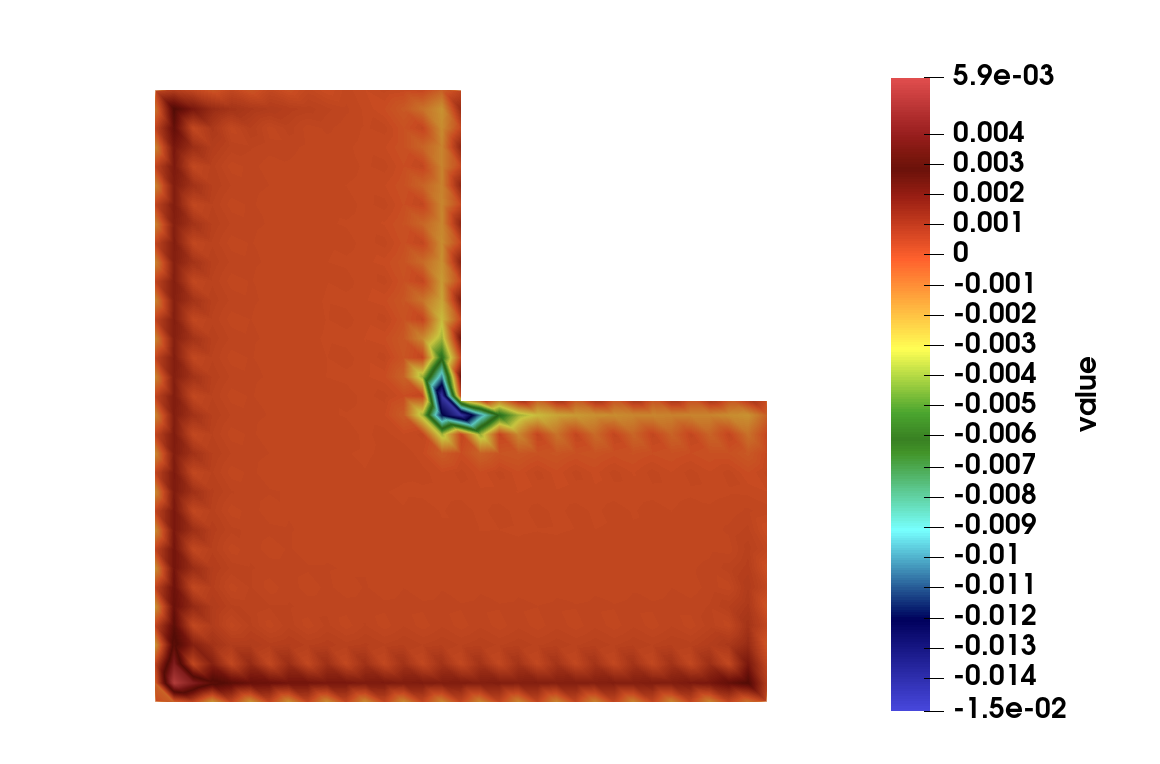}};
          \draw(3,-2.5) node {$\ell=4$};
          \draw(0,-9) node {
      \includegraphics[width=0.4\textwidth]{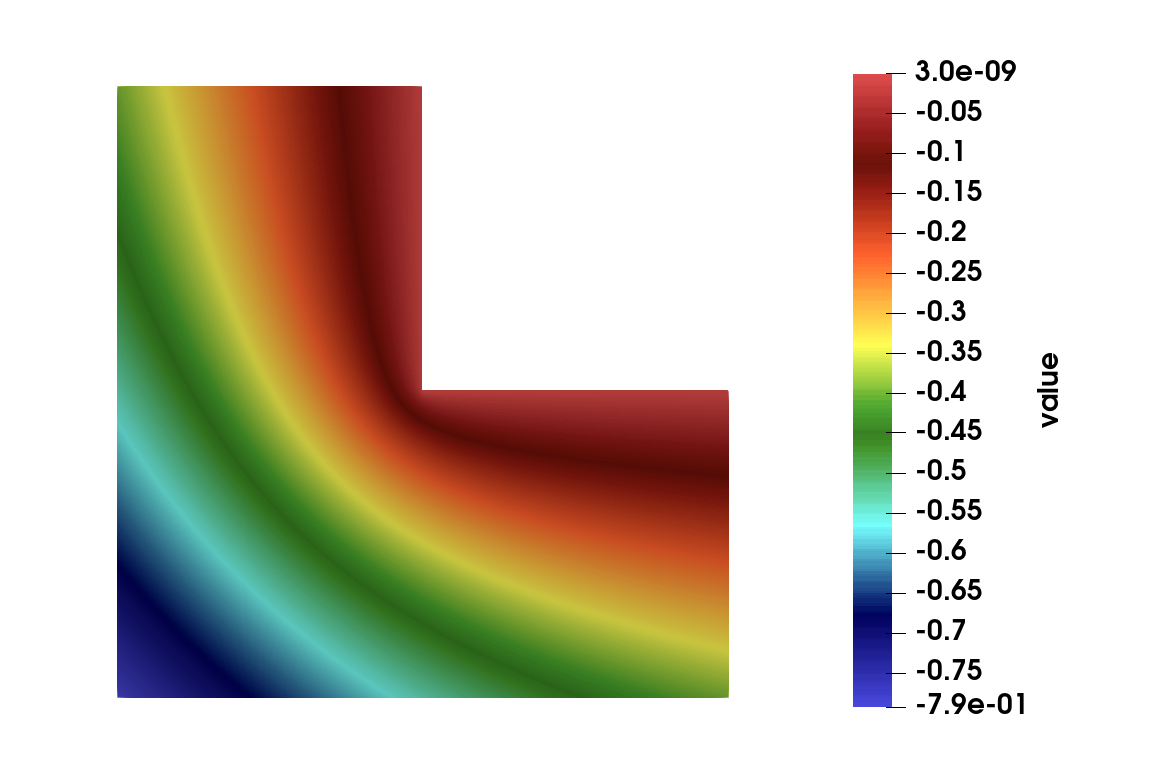}};
          \draw(6,-9) node {
      \includegraphics[width=0.4\textwidth]{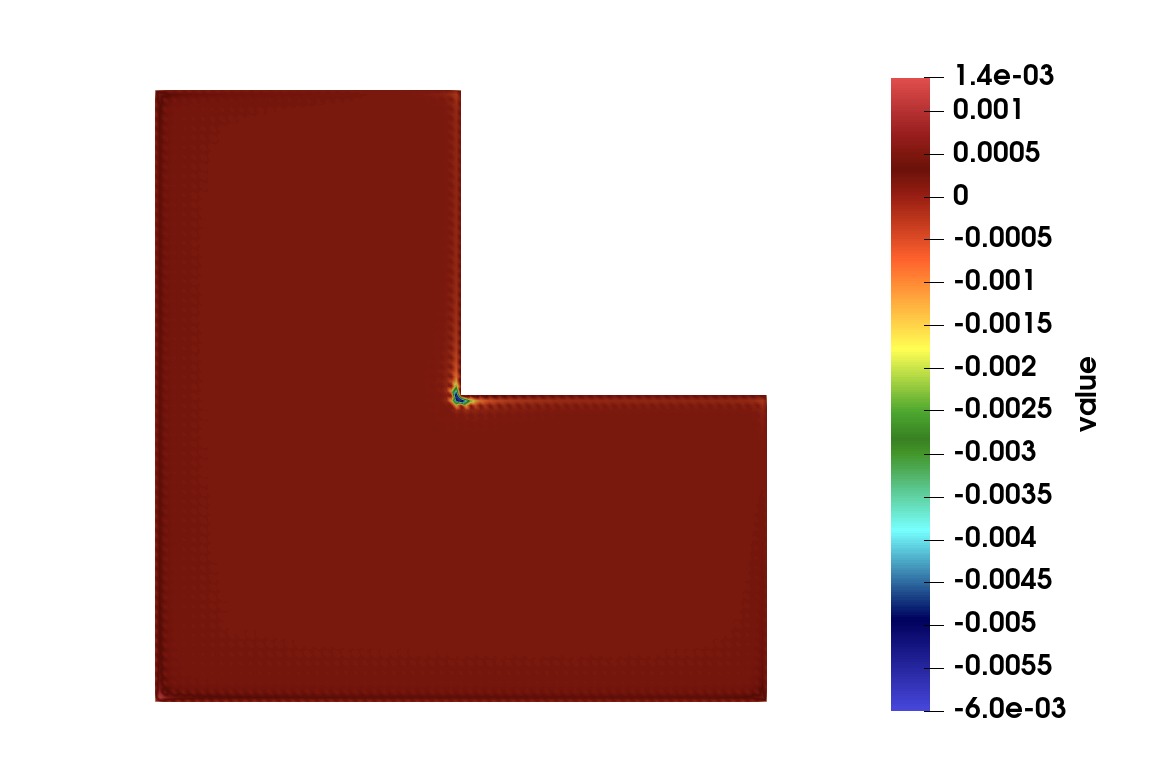}};   
    \draw(3,-7) node {$\ell=6$};
          \draw(0,-13.5) node {
      \includegraphics[width=0.4\textwidth]{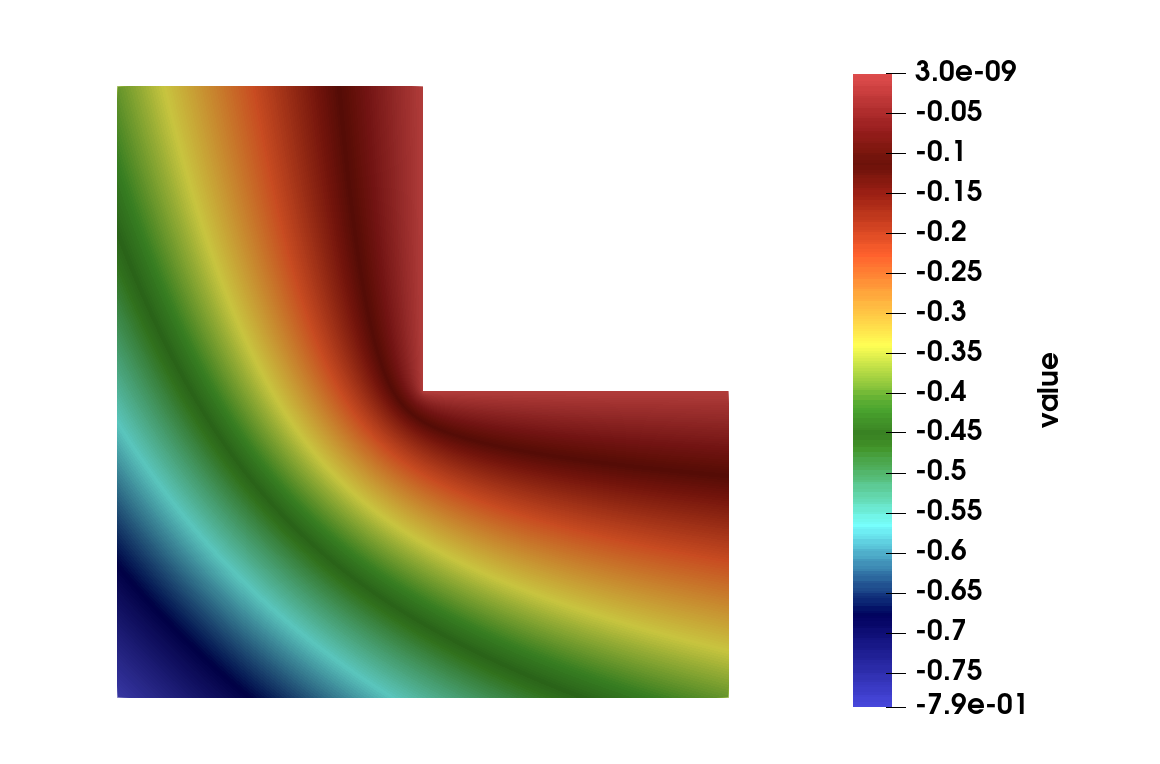}};
           \draw(6,-13.5) node {
     \includegraphics[width=0.4\textwidth]{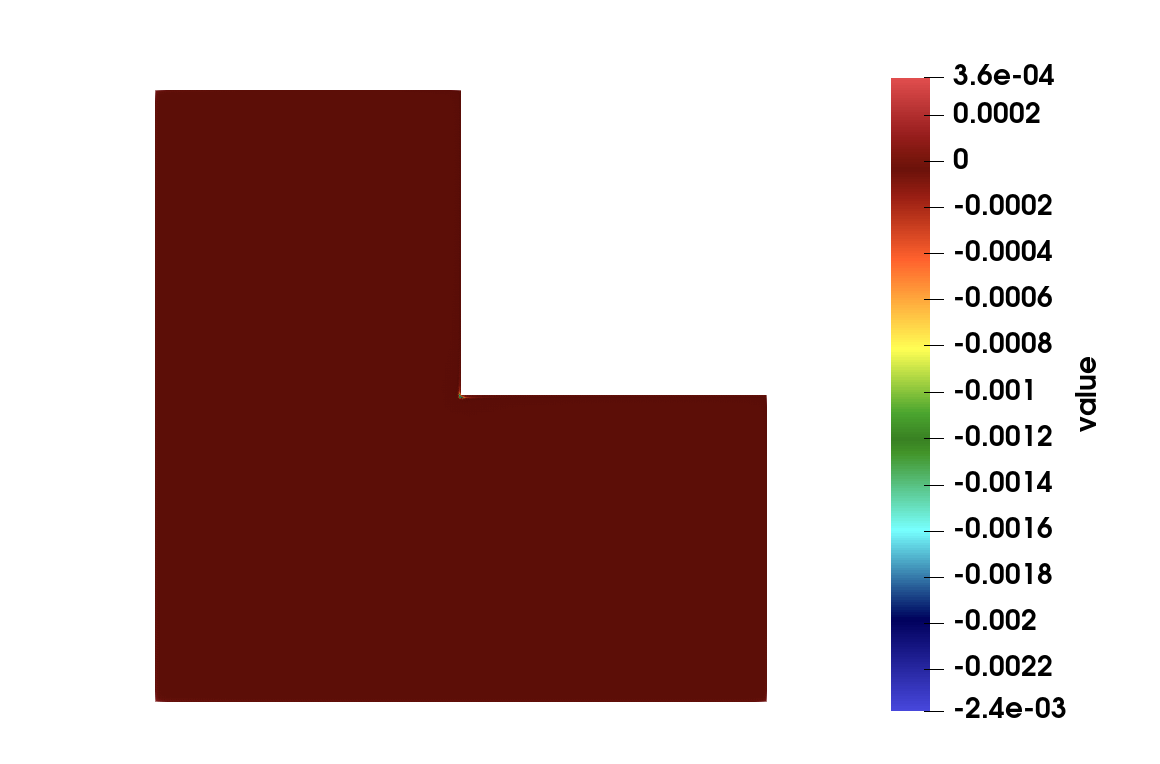}};
    \draw(3,-11.5) node {$\ell=8$};
    \end{tikzpicture}
    \caption{Solution (left) and corresponding residuals (right) for the 
      interpolation of the solution to the Laplace equation on the L-shaped
      domain for levels 
  \(\ell=2, 4, 6, 8\).}
    \label{fig:PlotLshape}
\end{figure}

\subsection{Function on a point cloud in three spatial dimensions}
\label{LucySection}
As final example, we consider a function defined on a uniform resampling
of the Lucy surface model of the \texttt{Stanford 3D Scanning Repository}. 
To construct the local approximation spaces, we selected random subsamples
of the resulting point cloud, ensuring a specific cardinality for each set. 
We start with an initial set of 20 points, and at each subsequent level, 
the cardinality is increased by a factor of 16. In this study, we present 
results for both nested and non-nested point sets. For the nested case, the 
indices of the points are randomly shuffled, and subsamples are obtained by 
selecting the first $n$ points, where $n$ represents the desired cardinality. 
In contrast, for the non-nested case, each subsample is randomly selected 
from the entire dataset 
independently, i.e., without enforcing the inclusion of points from previous 
levels. Unlike previous tests, this experiment
is more general, as the points are not arranged on a uniform grid, they are not
necessarily nested, and they are not necessarily quasi-uniform. 
As test function, we interpolate 
\(
  f({\bs x}) = \big(\sqrt[4]{\|{\bs x}-{\bs x}_0\|_2} + 10^{-4}\big)^{-1}
\)
The point ${\bs x}_0$ is selected near the torch that Lucy holds in her left
hand.
To optimize performance, we use a relatively small value for the parameter
$\gamma$ and set $\rho = 3$, $q=2$, while the a-posteriori threshold is set 
to \(10^{-5}\). These adjustments help decrease both computational time and the
number of non-zero entries. However, a very small $\gamma$ can compromise 
the precision, while a very large $\gamma$ increases the number of conjugate 
gradient iterations, thereby increasing computational time. A good 
trade-off is given by $\gamma = 0.2$. Furthermore, we employed a diagonally
preconditioned conjugate gradient method with accuracy $10^{-6}$ to solve the
linear system, which reduces the number of iterations by a factor of 
about three compared to the unpreconditioned case. The number of points per 
level and the corresponding fill-distances are provided
in Table \ref{tab:PointsLucy}. 

\begin{table}[htb]
\begin{center}
    \begin{tabular}{rrll}
        \toprule
        $\ell$ & \textbf{$N_\ell$} & \textbf{$h$} (nested) & 
        \textbf{$h$} (non-nested)  \\
        \midrule
        1 & 20 & 0.25  &  0.34\\
         2 & 320 & 0.15 & 0.13\\
         3 &  5\,120 & 0.061 & 0.064\\
         4 &  81\,920 & 0.017 & 0.018\\
         5 & 1\,310\,720 & 0.0051 & 0.0050\\
        \bottomrule
    \end{tabular}
  \end{center}  
    \caption{Number of points per level and corresponding fill-distances 
    for the interpolation on the Lucy point cloud.}
       \label{tab:PointsLucy}
\end{table}

The error is evaluated on a finer set of 2\,000\,000 points, which are randomly
selected once from the original dataset. The evaluation set remains fixed 
throughout all experiments to ensure consistency in error measurement. 
The results using the Mat\'ern-1/2 and Mat\'ern-3/2 RBFs are shown in 
Tables \ref{tab:LucyResults12} and \ref{tab:LucyResults32}, respectively.

\begin{table}[htb]
\centering
\begin{tabular}{|c|c|c|c|c|c|c|c|c|}
\hline 
\multirow[t]{2}{*}{ } & \multicolumn{8}{|c|}{$\gamma=0.2$} \\
\hline 
\multirow[t]{2}{*}{ } & \multicolumn{4}{|c|}{nested} 
                      & \multicolumn{4}{|c|}{non-nested} \\
\hline 
$\ell$ &   $\text{error}_2$ &  $\text{order}_2$ & \%nz & CG &  
$\text{error}_2$ & $\text{order}_2$ & \%nz & CG \\
\hline 
1 & $5.98 \cdot 10^{-1}$ & -- &  50 & 8  & 
$4.49 \cdot 10^{-1}$ & -- & 50 & 9 \\

2  & $1.60 \cdot 10^{-1}$ & 2.58 &  44 & 26 & 
$1.27 \cdot 10^{-1}$ & 1.31 & 42 & 28 \\

3  & $3.08 \cdot 10^{-2}$ & 1.83 &  6.5 & 49 & 
$2.25 \cdot 10^{-2}$ & 2.44 & 6.9 & 45 \\

4 & $5.22 \cdot 10^{-3}$ & 1.39 &  0.51 & 56 & 
$2.36 \cdot 10^{-3}$ & 1.78 & 0.50 & 65\\

5  & $7.94 \cdot 10^{-4}$ & 1.56 &  0.03 & 57 & 
$2.51 \cdot 10^{-4}$ & 1.74 & 0.03 & 69\\

\hline
\end{tabular}
    \caption{
Results for the interpolation of a test function on the
Lucy point cloud for the $C^0$-smooth Mat\'ern-1/2 RBF
using $\rho = 3$, $q= 2$
as samplet compression parameters.}
    \label{tab:LucyResults12}
\end{table}

\begin{table}[htb]
\centering
\begin{tabular}{|c|c|c|c|c|c|c|c|c|}
\hline 
\multirow[t]{2}{*}{ } & \multicolumn{8}{|c|}{$\gamma=0.2$} \\
\hline 
\multirow[t]{2}{*}{ } & \multicolumn{4}{|c|}{nested} 
                      & \multicolumn{4}{|c|}{non-nested} \\
\hline 
$\ell$ &   $\text{error}_2$ &  $\text{order}_2$ & \%nz & CG &  
$\text{error}_2$ & $\text{order}_2$ & \%nz & CG \\
\hline 
1 & $ 6.16 \cdot 10^{-1}$ & -- &  50 & 11  & 
$4.50 \cdot 10^{-1}$ & -- & 50 & 12 \\

2  & $ 1.48 \cdot 10^{-1}$ & 2.79 &  45 & 74 & 
$1.14 \cdot 10^{-1}$ & 1.43 & 44 & 79 \\

3  & $ 2.92 \cdot 10^{-2}$ & 1.80 &  6.5 & 214 & 
$1.73 \cdot 10^{-2}$ & 2.66 & 7.0 & 247 \\

4 & $5.41  \cdot 10^{-3}$ & 1.32 &  0.51 & 341 & 
$1.32 \cdot 10^{-3}$ & 2.03 & 0.51 & 398\\

5  & $ 8.57 \cdot 10^{-4}$ & 1.53 &  0.03 & 349 & 
$1.61 \cdot 10^{-4}$ & 1.64 & 0.03 & 430\\

\hline
\end{tabular}
    \caption{
Results for the interpolation of a test function on the
Lucy point cloud for the $C^1$-smooth Mat\'ern-3/2 RBF
using $\rho = 3$, $q= 2$
as samplet compression parameters.}
    \label{tab:LucyResults32}
\end{table}

We observe that the $C^0$-smooth Matérn-1/2 RBF performs better in this
scenario, primarily due to the improved conditioning of the associated 
generalized Vandermonde matrix. 
\textcolor{blue}{In both cases, the approximation error is higher than the compression error 
$\kappa = 10^{-5}$.}

A visualization of the solution on all five levels \(\ell=1,\ldots, 5\)
using the Mat\'ern-1/2 RBF and non-nested points is depicted in 
Figure \ref{fig:Lucy}.

\begin{figure}[htb]
    \centering
  \begin{tikzpicture}
    \draw(0,0) node {
      \includegraphics[width=0.36\textwidth]{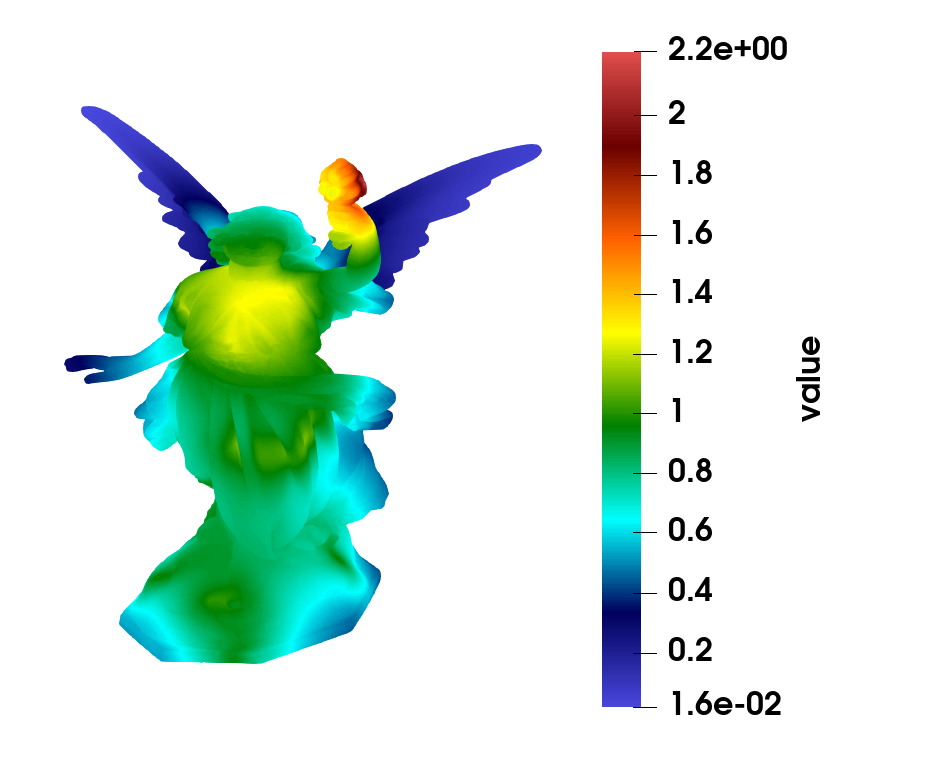}};
      \draw(6,0) node {
          \includegraphics[width=0.36\textwidth]{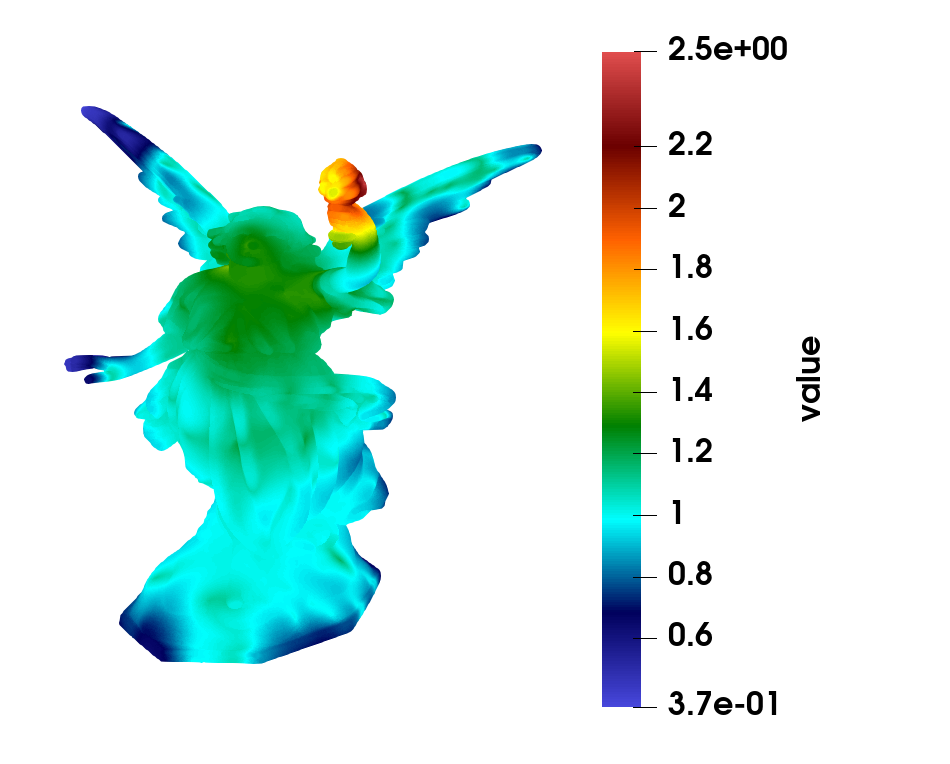}};
            \draw(0,2.25) node {$\ell=1$};
            \draw(6,2.25) node {$\ell=2$};
      \draw(0,-5) node {
          \includegraphics[width=0.36\textwidth]{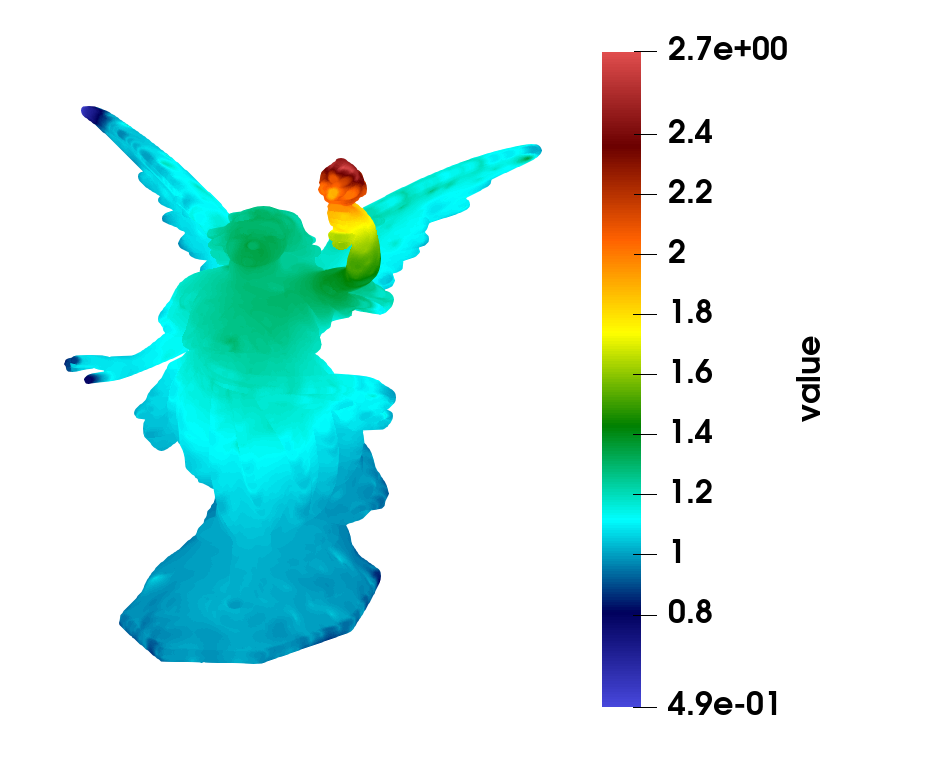}};
        \draw(6,-5) node {
        \includegraphics[width=0.36\textwidth]{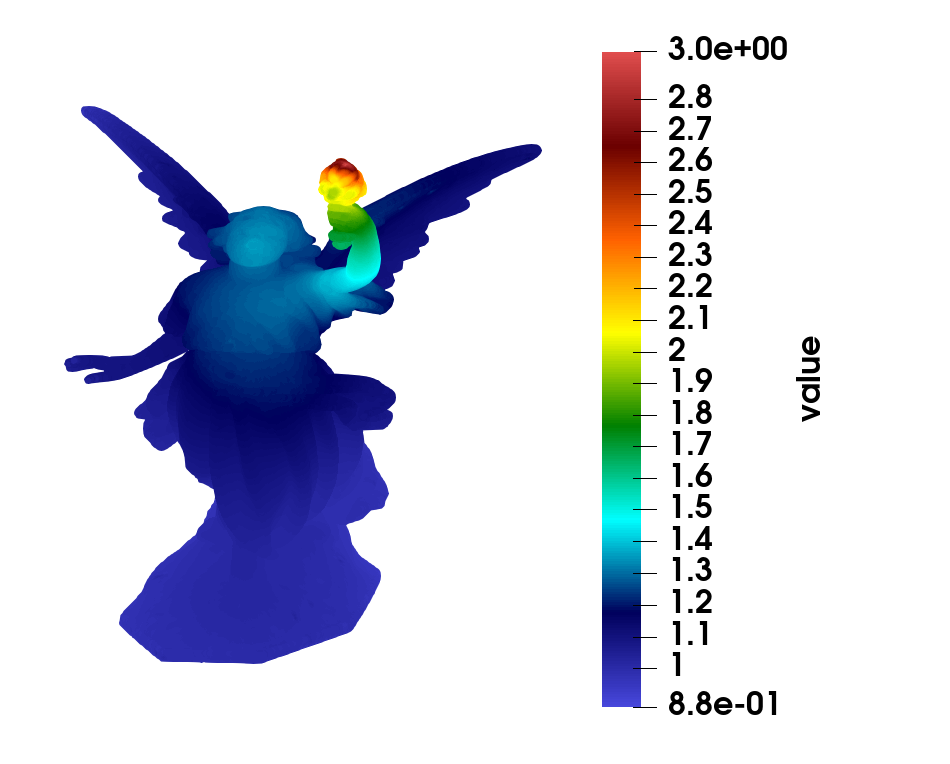}};
          \draw(0,-2.75) node {$\ell=3$};
          \draw(6,-2.75) node {$\ell=4$};
          \draw(3,-10) node {
      \includegraphics[width=0.36\textwidth]{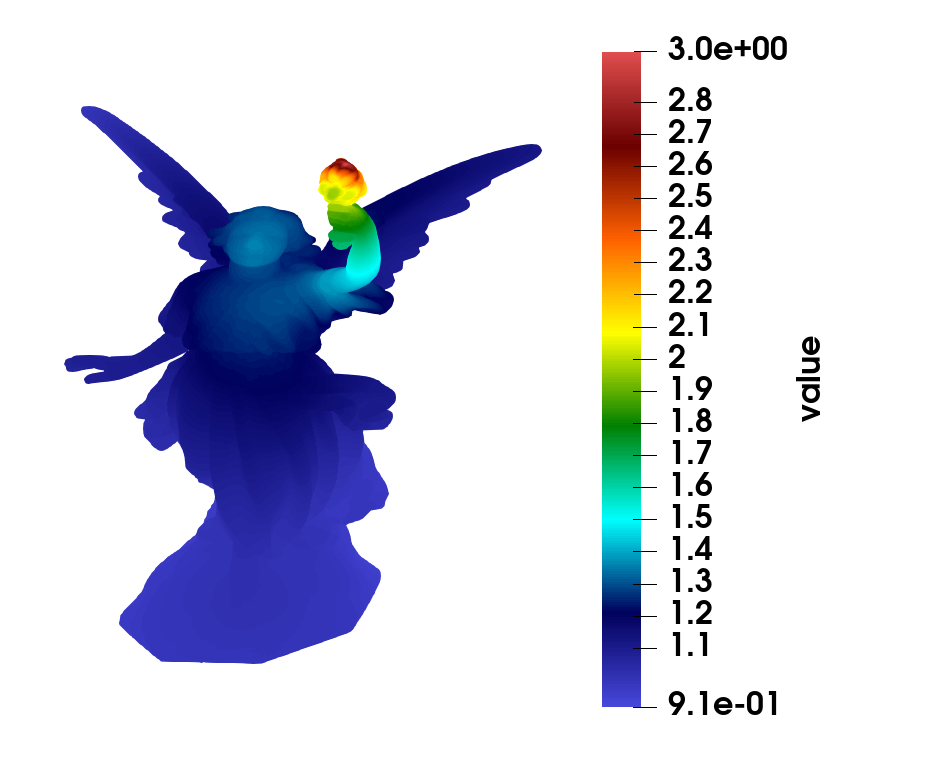}};
                \draw(3,-7.75) node {$\ell=5$};
    \end{tikzpicture}
    \caption{Interpolant of the test function on the Lucy point cloud
    for levels $\ell=1, \ldots, 5$ using the Mat\'ern-1/2 RBF.}
    \label{fig:Lucy}
\end{figure}

\section{Conclusion}\label{sec:Conclusion}
We have proposed an efficient multiscale approach for scattered data
interpolation with globally supported RBFs in combination with the samplet
matrix compression. In particular, we have extended existing results
on the boundedness of the condition numbers of the generalized Vandermonde
matrices within the multiscale approximation towards globally supported RBFs.
For appropriately chosen lengthscale parameters, the condition numbers of the
diagonal blocks of the multiscale system stay bounded
independently of the level.
\textcolor{blue}{As a consequence, the diagonally scaled
multiscale system becomes well conditioned. We have exploit this fact and
derived a general error estimate bounding the consistency error issuing from 
a numerical approximation of the multiscale system. This result particularly
allows to rigorously bound the consistency error arising from the samplet 
matrix compression. The sparse representation of the multiscale system
in samplet coordinates}
allows for an efficient solution of the
linear system by the conjugate gradient method with a bounded number
of iterations for a given accuracy, \textcolor{blue}{due to the well conditioning}. 
The resulting algorithm has an overall
cost of \(\Ocal(N\log^2 N)\) for \(N\) quasi-uniform data sites, 
given local approximation spaces with exponentially decreasing
dimension with decreasing level.
\textcolor{blue}{The numerical results demonstrate that the presented approach
achieves a similar accuracy to the multiscale approach using compactly supported RBFs, 
as presented in \cite{wendland2010multiscale}, where the corresponding experiments can 
be found. Meanwhile, our approach overcomes previous limitations that made the use of 
globally supported RBFs impractical for large-scale problems.
}
We have demonstrated that the present
multiscale approach can handle datasets with millions of data in both two and
three spatial dimensions. As test cases, we have considered smooth and
non-smooth data in two dimensions and a complex geometry in three
spatial dimensions. In all cases, we achieved accurate results that would have
been computationally infeasible without the samplet matrix compression.


\bibliographystyle{plain}
\bibliography{bibliography}
\end{document}